\documentclass[a4paper]{article}

\usepackage{
amsmath,
amsthm,
amscd,
amssymb,
}
\usepackage{comment}
\usepackage{hyperref}
\usepackage{tikz}
\usetikzlibrary{positioning,arrows,calc}
\usepackage{xspace}

\setcounter{tocdepth}{3}
\usepackage{graphicx}

\usepackage{url}
%\urldef{\mailsa}\path|{alfred.hofmann, ursula.barth, ingrid.haas, frank.holzwarth,|
%\urldef{\mailsb}\path|anna.kramer, leonie.kunz, christine.reiss, nicole.sator,|
%\urldef{\mailsc}\path|erika.siebert-cole, peter.strasser, lncs}@springer.com|    

\theoremstyle{plain}
\newtheorem{lemma}{Lemma}
\newtheorem{definition}{Definition}
\newtheorem{corollary}{Corollary}

\newtheorem{theorem}{Theorem}

\newtheorem{question}{Question}

\newtheoremstyle{derp}% <name>
{3pt}% <Space above>
{3pt}% <Space below>
{}% <Body font>
{}% <Indent amount>
{\upshape}% <Theorem head font>
{:}% <Punctuation after theorem head>
{.5em}% <Space after theorem headiF
{}% <Theorem head spec (can be left empty, meaning `normal')>
\theoremstyle{derp}

\newcommand{\Z}{\mathbb{Z}}

\newcommand{\N}{\mathbb{N}}

\newcommand{\Aut}{\mathrm{Aut}}

\title{Minimal sofic shift on a group that is not finitely-generated}

\author{
Ville Salo \\
vosalo@utu.fi
}

\begin{document}
\maketitle

\begin{abstract}
We prove that there exists a group which is not finitely generated, but admits a minimal sofic shift. This answers a question of Doucha, Melleray and Tsankov. The group is of the form $(F_4 \times F_2) \rtimes F_{\infty}$. The construction itself is based on simulation theory and properties of Thompson's~$V$.
\end{abstract}

\section{Introduction}

Let $G$ be a countable group and $A$ a finite set (called the alphabet). The \emph{full shift} is the Cantor set $A^G$ where $G$ acts continuously by $gx_h = x_{g^{-1}h}$. \emph{Subshifts} are closed $G$-invariant subsets of a full shift. \emph{Subshifts of finite type} or \emph{SFTs} are subshifts defined by removing all points whose $G$-orbit intersects a particular clopen set. \emph{Sofic shifts} are subshifts that are images of SFTs under continuous shift-commuting surjective functions; such an SFT is called the sofic shift's \emph{SFT cover}. A subshift is \emph{minimal} if it has no proper nontrivial subsystems (closed $G$-invariant subsets).

The following questions are posed in a recent paper of Michal Doucha, Julien Melleray and Todor Tsankov \cite{DoMeTs25} (they are respectively Question~7.18 (i) and (ii)).

\begin{question}
\label{q:MinimalSFTOnNonFG}
Is there a nontrivial minimal subshift of finite type on a group that is not finitely generated?
\end{question}

\begin{question}
\label{q:MinimalSoficOnNonFG}
Is there a nontrivial minimal sofic shift on a group that is not finitely generated?
\end{question}

In the following sections, we outline two somewhat disjoint motivations for Question~\ref{q:MinimalSFTOnNonFG} and Question~\ref{q:MinimalSoficOnNonFG}, the first one from the perspective of generic dynamical systems (this is how \cite{DoMeTs25} was lead to the questions), and the second an ``a posteriori'' motivation from symbolic dynamics, which the author finds equally compelling.

We solve the second question in the present paper:

\begin{theorem}
\label{thm:Main}
%On the group $F_4 \times F_2$, there exists a minimal sofic shift, such that its free extension in some group extension
There is a group of the form $(F_4 \times F_2) \rtimes F_{\infty}$, which admits an infinite minimal sofic shift.
\end{theorem}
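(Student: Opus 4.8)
My plan is to treat the prescribed form $(F_4\times F_2)\rtimes F_\infty$ as a convenience rather than an obstruction. Whatever action of $F_\infty$ on $F_4\times F_2$ we eventually use, the group $G$ admits $F_\infty$ as a quotient, namely $G/(F_4\times F_2)\cong F_\infty$. Since quotients of finitely generated groups are finitely generated while $F_\infty$ is not, $G$ is automatically not finitely generated. Thus the entire burden is to equip \emph{some} group of this shape with an infinite minimal sofic shift, and the design freedom lies in (i) the choice of the semidirect $F_\infty$-action, which I would read off from Thompson's $V$, and (ii) the SFT cover, which I would build on the ``computing plane'' $F_4\times F_2$.

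The first step is to isolate a concrete minimal system to aim at. I would use the minimal, highly homogeneous action of Thompson's $V$ on Cantor space $\{0,1\}^\N$ (every orbit is dense, since prefix-exchange elements carry any cylinder onto any other). From this I would extract a minimal, \emph{effectively closed} subshift $Y_0$ indexed by the free generators of $F_\infty$: assign to the $i$-th generator $t_i$ a prescribed self-similar ``move'' in the $V$-action, so that the orbit-closure of a base point, read through a finite clopen generating partition, is a computable minimal system whose homogeneity is inherited from the transitivity of $V$. The point of routing everything through $V$ is that its simplicity and transitivity are exactly what should later forbid proper subsystems.

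The second step is the simulation. The base group $F_4\times F_2$ is a product of two nonabelian free groups, and its Cayley graph carries grid-like structures on which one can lay out space--time diagrams of Turing machines; consequently $G$ supports SFTs of full computational power. I would choose the action of $F_\infty$ on $F_4\times F_2$ so that the computing plane ``sees'' each $F_\infty$-direction, and then construct an SFT $X$ on $G$ whose local rules force each finite window to carry a verified computation checking that the $F_\infty$-labels obey the constraints defining $Y_0$. The candidate subshift is then the image $Y=\pi(X)$ under the factor map $\pi$ that erases the computation layer and retains only the $F_\infty$-labels; $Y$ is sofic by construction, and because the $F_\infty$-labels range over the infinite system $Y_0$, the shift $Y$ is infinite.

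The hard part, as always, is \emph{minimality}: I must rule out any proper nonempty closed $G$-invariant subset of $Y$, and this is where I expect the main obstacle to lie. Two difficulties compound. First, the factor map may create spurious configurations, and the infinitely many $F_\infty$-directions give infinitely many ``places'' where a proper subsystem could hide; I would counter this by exploiting the $V$-homogeneity, arranging that any legal configuration can be locally steered into any prescribed finite pattern so that every orbit is dense, and by making the SFT rules rigid in the $F_4\times F_2$-directions so that the computation layer is essentially forced and $\pi$ does not glue orbits badly. Second, I must verify that the finite, local SFT rules truly enforce the global, infinite-index constraints of $Y_0$; this should follow from the self-similar, recursive nature of the $V$-construction, which lets a finite rule-set certify an unbounded hierarchy. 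Once minimality and soficity are both secured, Theorem~\ref{thm:Main} follows, with non-finite-generation already guaranteed by the quotient onto $F_\infty$.
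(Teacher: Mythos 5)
There is a genuine gap, and it sits exactly where you predicted it would: minimality. Your plan is to build an SFT on the whole group $G=(F_4\times F_2)\rtimes F_\infty$ whose local rules ``check that the $F_\infty$-labels obey the constraints defining $Y_0$.'' But local rules on a non-finitely-generated group can only see finitely many group elements, so any SFT (and hence any sofic shift, via its cover) on $G$ is forced to be the free extension of a subshift living on a finitely generated subgroup, with \emph{independent} copies of that subshift on the different cosets. You cannot impose any constraint tying the $F_\infty$-directions together by SFT rules; this is precisely the obstruction that makes the question nontrivial, and it is why the free extension of a minimal system is essentially never minimal on the nose: the $K$-subaction on the induction is a product of copies of the base system, and a product of copies of one minimal system always contains the diagonal as a proper invariant subsystem. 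Your appeal to ``$V$-homogeneity'' and ``steering configurations locally'' does not break this diagonal.

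The paper's resolution, which is absent from your proposal, is to make the copies on different cosets genuinely different by twisting them with automorphisms: the $F_\infty$-action on $F_4\times F_2$ is chosen as conjugation by elements of $B\cong F_2$ inside $F_4=A*B$, so that on the induction the coordinate indexed by $d$ carries the system $X^{\psi_d}$ rather than $X$ itself. Minimality of the product $\prod_d X^{\psi_d}$ (``minimal $\Phi$-joinings'') is then proved in two steps: tuples with pairwise distinct coordinates are handled by the \emph{antidiagonal minimality} of the $A$-action (the diagonal $A$-action is minimal off the diagonal of every finite power --- a much stronger transitivity property of $V$ than minimality or homogeneity), and coincident coordinates are separated by a commutator argument using that $\phi:D\to B$ is injective into $V/Z(V)$ and $V$ is centerless. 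None of this machinery --- the automorphic twisting, antidiagonal minimality, or the centerlessness/commutator step --- appears in your sketch, and without it the candidate system you describe has proper subsystems. Your other ingredients (routing through $V$, simulation on $F_4\times F_2$, non-finite-generation via the quotient onto $F_\infty$) are consistent with the paper, which uses the black-box self-simulability of products of nonamenable groups rather than hand-built space--time diagrams.
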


Here, $F_k$ is a free group on $k$ free generators. The $F_{\infty}$-action on $F_4 \times F_2$ is by conjugation, by suitable elements of the subgroup $F_4 \times \{1_{F_2}\} \leq F_4 \times F_2$.

We of course obtain infinitely many examples of groups where such minimal sofic shifts exist, but the assumptions are technical. The interested reader should see Lemma~\ref{lem:ABCD} for the precise requirements (and Theorem~\ref{thm:MainTheorem} for how to use them to deduce the theorem above)

It is known that not all groups will work: In \cite[Proposition~7.16]{DoMeTs25}, it is shown that infinite minimal sofic shifts cannot exist on non-finitely generated groups that are either amenable or locally finite. It is shown in \cite[Theorem~5.5]{Do24} that direct products and free products cannot be used to construct an example.

The proof of Theorem~\ref{thm:Main} combines two strong tools: high transitivity properties of Thompson's $V$ \cite{BuClRi17,BeFoHyZa25}, and strong black-box techniques for constructing sofic shifts \cite{BaSaSa21,Ba19}.

The structure of the proof is as follows. We first explain the conditions for when the free extension of a minimal system to a semidirect product stays minimal. Namely, this happens if and only if the system is topologically disjoint (in the sense of \cite{Fu67}) from the family of its automorphic conjugates realized in the semidirect product. We refer to this property as minimal $\Phi$-joinings (for $\Phi$ a class of automorphisms).

We then show that the minimal $\Phi$-joinings property can be realized by using an action with sufficiently strong transitivity properties, specifically we can use the standard action of Thompson's $V$. Finally, to make the subshift sofic, we use the simulation theorem from \cite{BaSaSa21} (or \cite{Ba19}), which morally states that all subshifts (more precisely, all up to the obvious computability restriction) are sofic on certain large groups.

Going back to Question~\ref{q:MinimalSFTOnNonFG}, our suspicion is that the answer is positive. An interesting concrete question that we cannot solve, and which may be of independent interest, is the following:

\begin{question}
Let $F_2 \curvearrowright \partial F_2$ be the boundary action (the SFT where in the standard Cayley graph of $F_2$ we orient edges with constant out-degree $1$). Is there an embedding $F_2 \leq G$ such that the induction of $F_2 \curvearrowright \partial F_2$ to $G$ is minimal?
\end{question}

\subsection{Motivation 1}

If $G$ is a countable group, the set of topological $G$-systems (i.e.\ continuous $G$-actions) on Cantor space forms a Polish space. This gives us a notion of which dynamical properties are rare (meaning the set of systems with this property is meager) and which are common (the set is residual). Some works studying spaces of dynamical systems from this perspective (in various settings and categories) are  \cite{OxUl41,Ha44,Ju81,AlPr01,GlWe01,AlPr02,AkHuKe03,FoWe04,GlThWe06,KeRo07,AkGlWe08,Ho08,BeNiDa12,Ho12a,Kw12,PaSc23,Do24}.

One interesting question is whether a generic (dense $G_\delta$) conjugacy class exists. It was shown by Kechris and Rosendal \cite{KeRo07} that such a class exists for the group $\Z$ and by Kwiatkowska \cite{Kw12} that one exists for all finitely-generated free groups. On the other hand, Hochman \cite{Ho12} shows that such a conjugacy class does not exist for $\Z^d$, $d \geq 2$, and \cite{KeRo07} shows that one does not exist for the free group on infinitely many generators.

It was shown by Doucha in \cite{Do24} that the question of existence of a generic conjugacy class of $G$-actions of Cantor space can be stated as a problem in symbolic dynamics. We say a sofic shift $Y \subset B^G$ is \emph{projectively isolated} if there is a shift-commuting continuous surjection $\phi : X \to Y$ from an SFT $X \subset A^G$, such that for some neighborhood $U$ of $X$ in the Hausdorff metric (in the space of subshifts of $A^G$, obtained from any metric inducing the Cantor topology) every subshift in $U$ maps precisely onto $Y$ under $\phi$. The following was proved in \cite{Do24} 

\begin{theorem}
Let $G$ be a countable group. A generic conjugacy class of $G$-actions on Cantor space exists if and only if projectively isolated sofic shifts are dense in the space of $G$-subshifts.
\end{theorem}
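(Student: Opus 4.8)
The plan is to derive the equivalence from the standard descriptive-set-theoretic criterion for the existence of a comeager orbit, transported to the symbolic setting. I would work in the Polish space $\mathcal{A}$ of continuous actions $\alpha \colon G \to \mathrm{Homeo}(2^{\mathbb{N}})$, on which $H = \mathrm{Homeo}(2^{\mathbb{N}})$ acts by conjugation, so that orbits are exactly conjugacy classes of $G$-systems on Cantor space. The relevant fact is the Effros/Becker--Kechris dichotomy: a conjugacy class is comeager if and only if it is dense and non-meager, and a dense orbit is non-meager precisely when it is $G_\delta$, which can in turn be verified by a local stability condition on a basis of open sets. The first step is therefore to fix a concrete countable basis for $\mathcal{A}$ and to reformulate both density (topological transitivity of the conjugation action) and the local stability condition in purely symbolic terms.

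The bridge to subshifts is that every Cantor $G$-system is the inverse limit of its subshift factors coming from finite clopen partitions: choosing a refining sequence of partitions $\mathcal{P}_n$ of $2^{\mathbb{N}}$ and coding $x \mapsto (g \mapsto \mathcal{P}_n\text{-cell of } \alpha_{g^{-1}} x)$ exhibits $\alpha$ as $\varprojlim Y_n$ with $Y_n \subseteq \mathcal{P}_n^{\,G}$ a subshift and each $Y_{n+1} \to Y_n$ a factor map. Under this coding a basic neighborhood of $\alpha$ in $\mathcal{A}$ pins down one factor $Y_n$ up to small Hausdorff distance, so neighborhoods in $\mathcal{A}$ correspond to neighborhoods in the space of $G$-subshifts. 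The crucial dictionary to verify is the following: for a fixed SFT cover $\phi \colon X \to Y$, the set of actions $\beta \in \mathcal{A}$ admitting $Y$ as the $\phi$-image of a subshift factor close to $X$ is \emph{open} in $\mathcal{A}$ exactly when $Y$ is projectively isolated. It is the finite-type hypothesis on the cover that makes this condition robust under Hausdorff perturbation, whereas a generic subshift factor would not be stable. This identifies ``projectively isolated sofic shift'' with ``robust finite approximation of a conjugacy class'', i.e.\ with the local building block demanded by the genericity criterion.

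For the direction assuming density of projectively isolated sofic shifts I would run a Baire-category fusion, essentially the Kechris--Rosendal mechanism \cite{KeRo07} with weak amalgamation instantiated by the density hypothesis. Enumerate a basis of $\mathcal{A}$; at stage $n$, use density to locate a projectively isolated sofic shift $Y_n$ approximating the prescribed factor, whose isolation makes the requirement ``$\beta$ has $Y_n$ as a $\phi_n$-image of a nearby factor'' open, and use topological transitivity to conjugate so that all requirements are met along a coherent refining sequence of partitions; the countable intersection of these open dense sets is then a comeager set all of whose members realize the same inverse limit up to conjugacy. For the converse, from a comeager class $[\alpha]$ one unwinds its $G_\delta$ description into countably many open conditions on finite subshift factors; genericity forces each finite factor of $\alpha$ to be recovered from any sufficiently close perturbation of its cover, which by the dictionary above says precisely that these factors are projectively isolated sofic shifts, and transitivity lets one realize an arbitrary target $Z$ arbitrarily well as a factor of a conjugate of $\alpha$, giving density.

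The main obstacle I anticipate is making the dictionary of the second paragraph fully precise: one must match the alphabet-wise Hausdorff topology on $G$-subshifts with the conjugation topology on Cantor actions through the inverse-limit coding, and in particular show that projective isolation is \emph{equivalent} to the local non-meagerness condition rather than merely sufficient for it. Controlling the bookkeeping of refining partitions and of enlarging alphabets, so that local isolation at each finite stage fuses into a single genuine comeager orbit, and so that it is exactly the finite-type property of the cover that supplies the needed openness, is where the real work lies.
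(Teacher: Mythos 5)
First, a point of reference: the paper does not prove this theorem at all --- it is quoted verbatim from Doucha's paper \cite{Do24} as background motivation, so there is no in-paper proof to compare your argument against. Your proposal does follow the general spirit of how such a result is established (the Effros/Kechris--Rosendal criterion for a comeager orbit, plus the presentation of Cantor $G$-systems as inverse limits of subshift factors), but as written it is a strategy outline rather than a proof, and the two places where you yourself flag ``the real work'' are genuine gaps, not bookkeeping.

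The first gap is the ``dictionary'': you assert, but do not establish, that for an SFT cover $\phi\colon X\to Y$ the condition ``$\beta$ admits $Y$ as the $\phi$-image of a subshift factor close to $X$'' is open in the space of actions \emph{exactly when} $Y$ is projectively isolated. This equivalence is the entire content of the theorem. To prove it you must reconcile two different topologies (the Hausdorff metric on subshifts of a fixed $A^G$, versus the conjugation topology on $\mathrm{Homeo}(2^{\N})$-actions) through a coding in which a single action has many inequivalent presentations as an inverse limit, and in which perturbing the action perturbs every coded factor at once; none of that is set up. The second gap is the fusion step in the forward direction: a countable intersection of dense open sets is comeager, but that does not by itself yield a single comeager \emph{conjugacy class} --- you must show that any two actions in the intersection are conjugate, which requires a back-and-forth argument driven by an amalgamation property. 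You invoke ``weak amalgamation instantiated by the density hypothesis,'' but you do not show that density of projectively isolated sofic shifts actually supplies either the amalgamation property or the topological transitivity of the conjugation action (the latter is an independent hypothesis in the Kechris--Rosendal criterion and does not follow from density of projectively isolated shifts without argument). Symmetrically, in the converse direction the claim that the generic action has subshift factors that are dense among \emph{all} subshifts (not merely that its own factors are projectively isolated) is asserted rather than derived. The correct course here is simply to cite \cite{Do24}, as the paper does.
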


Thus, the question of a generic conjugacy class reduces to the question of density of projectively isolated sofic shifts. With this method, it was shown that if $G$ is a free product of finitely many finite and cyclic groups, then there is a generic conjugacy class of $G$-actions on Cantor space. It was also shown in \cite{Do24} that Hochman's proof of nonexistence of generic $\Z^d$-actions can be generalized to some other groups.

Doucha, Mellerey and Tsankov made the following further observation in the non-finitely generated case, \cite[Theorem~1.7]{DoMeTs25}:

\begin{theorem}
Let $G$ be a group that is not finitely generated. Then projectively isolated sofic shifts are necessarily minimal.
\end{theorem}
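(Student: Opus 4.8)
The plan is to exploit the finite reach of any SFT cover on a non-finitely-generated group to put the cover into a coset-product form, and then to use the resulting abundance of ``independent directions'' to shrink the image of a non-minimal $Y$ while staying arbitrarily Hausdorff-close to its cover. First I would fix a cover $\phi \colon X \to Y$ with $X \subseteq A^G$ an SFT, and choose a finite $S \subseteq G$ containing both the window of the sliding block code $\phi$ and the supports of the finitely many forbidden patterns defining $X$; set $H = \langle S \rangle$. Because $S \subseteq H$, every forbidden pattern of $X$ and every code window $gS$ lies inside a single left coset $gH$, so, fixing a transversal $T$ of $G/H$, both $X$ and $\phi$ act coordinatewise on the decomposition $A^G \cong (A^H)^T$. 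Thus $X = \widehat{X}^{\,T}$ and $\phi = \widehat{\phi}^{\,T}$ for an SFT $\widehat{X} \subseteq A^H$ and a code $\widehat{\phi}$, whence $Y = \widehat{Y}^{\,T}$ with $\widehat{Y} = \widehat{\phi}(\widehat{X}) \subseteq B^H$. Since $H$ is finitely generated while $G$ is not, $H$ cannot have finite index (a finitely generated subgroup of finite index would make $G$ finitely generated), so $T$ is infinite.

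Next I would record the rigidity of SFTs in the Hausdorff metric: once $F_N$ exhausts the supports of the forbidden patterns of $X$, every subshift close enough to $X$ is in fact a subsystem $X' \subseteq X$ with $\mathcal{L}_{F_N}(X') = \mathcal{L}_{F_N}(X)$. Projective isolation of $Y$ via this cover therefore amounts to the existence of some $N$ such that $\phi(X') = Y$ for \emph{every} subsystem $X' \subseteq X$ with full $F_N$-language, and I would prove the contrapositive. Assuming $Y$ is not minimal, there is a point $y_0 \in Y$ with non-dense orbit, hence a pattern $q \in \mathcal{L}(Y)$ that does not occur in $\overline{\mathcal{O}(y_0)}$; the subshift $Y_q = \{ y \in Y : q \text{ does not occur in } y \}$ is then nonempty (it contains $y_0$) and proper (some point of $Y$ realizes $q$). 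Setting $X' = \phi^{-1}(Y_q) \subseteq X$, surjectivity of $\phi$ gives $\phi(X') = Y_q \subsetneq Y$. Because $\phi$ has finite window, $\phi^{-1}$ preserves Hausdorff-closeness up to the code radius, so $X'$ is close to $X = \phi^{-1}(Y)$ as soon as $Y_q$ is close to $Y$, and everything reduces to choosing $q$ so that $Y_q$ agrees with $Y$ on an arbitrarily large window, i.e.\ so that every $F_N$-pattern of $Y$ is realized inside a single configuration that globally avoids $q$.

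The hard part, and the only place where non-finite-generation is essential, is exactly this realization step. The idea is to take $q$ \emph{large}: enlarging the distinguishing pattern using the infinitely many cosets so that $q$ spans strictly more cosets than $F_N$ meets, which guarantees both that $q$ cannot fit inside $F_N$ and that every translate of $q$ overlaps some coset outside the finite region pinned down by the prescribed $F_N$-pattern. The independence of the coordinates in $Y = \widehat{Y}^{\,T}$ should then let me fill the remaining cosets so as to decorrelate the pieces of $q$ and destroy every potential occurrence, while leaving the finitely many coordinates visible to $F_N$ untouched; this is precisely what collapses when $G$ is finitely generated, since then $T$ is a single point, there are no spare cosets, and avoiding any pattern of a minimal fibre $\widehat{Y}$ already deletes local patterns. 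Carrying out this decorrelation \emph{uniformly} — simultaneously against all translates of $q$ and for an arbitrary prescribed local pattern, so that the single filled configuration really avoids $q$ everywhere — is the main technical obstacle, and I expect it to require a careful gluing or marker argument on the coset-product rather than a soft compactness statement.
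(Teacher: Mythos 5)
This statement is not proved in the paper at all: it is quoted from \cite{DoMeTs25} (their Theorem~1.7), so there is no internal proof to compare against; I can only assess your argument on its own terms. Your reduction is sound and is the natural one: the coset decomposition $X=\widehat{X}^{\,T}$, $Y=\widehat{Y}^{\,T}$ with $T$ infinite, the Hausdorff-rigidity of SFTs, and the reformulation ``for every $N$ find a proper nonempty subshift $Y'\subsetneq Y$ realizing every $F_N$-pattern of $Y$, then pull back along the coset-wise code'' are all correct. But the one step that actually proves the theorem is exactly the step you flag as ``the main technical obstacle'' and do not carry out, and the mechanism you propose for it --- enlarge $q$ to span more cosets than $F_N$ meets, then ``decorrelate the pieces'' on the spare cosets --- is the wrong mechanism. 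The condition ``every translate of $q$ overlaps some coset outside the prescribed region'' only hands you \emph{one} uncontrolled coset per translate, and when the fibre $\widehat{Y}$ is minimal (the interesting case) every single-coset piece of $q$ occurs syndetically in \emph{every} point of $\widehat{Y}$, so no choice of a point on one free coset can kill a piece outright; you would have to misalign syndetic occurrence sets simultaneously against infinitely many translates and against arbitrary prescribed data on $T_0$, and nothing you write makes that converge.

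The gap is fixable, but by a different enlargement and a different filling. Let $m=|T_0|$ and let $q$ be a pattern of $Y$ absent from $\overline{\mathcal{O}(y_0)}$. Take $q'$ to be $m+1$ translated copies of $q$ placed on \emph{pairwise disjoint} sets of $H$-cosets (such translates exist because no finitely generated subgroup of $G$ has finite index: pass to $H'=\langle H\cup \mathrm{supp}(q)\rangle$ and pick the translating elements in distinct cosets of $H'$; this is a second, essential use of non-finite-generation that your sketch does not isolate). Then $q'\in\mathcal{L}(Y)$ by coset-independence, so $Y_{q'}\subsetneq Y$. Given any prescribed data on the $m$ cosets of $T_0$, fill \emph{every} free coset $tH$ with $y_0|_{tH}$. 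Any translate of $q'$ consists of $m+1$ coset-disjoint copies of $q$, so at least one copy lies entirely in free cosets, where the configuration agrees with $y_0$; since $q$ does not occur anywhere in $y_0$, that copy fails, hence $q'$ never occurs. Thus $Y_{q'}$ has full $F_N$-language, and $X'=\phi^{-1}(Y_{q'})$ does too (lift coset-wise, choosing the prescribed $\widehat{X}$-configurations on $T_0$ and $\phi$-preimages of $y_0$'s cosets elsewhere), while $\phi(X')=Y_{q'}\neq Y$. No marker or gluing argument is needed; the point is that one entire copy of the original distinguishing pattern must land in the region you control with a single $q$-free point, not that the pieces of one copy get decorrelated.
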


By this theorem, the density of projectively isolated sofic shifts is equivalent to the density of minimal sofic shifts. The motivation for Question~\ref{q:MinimalSoficOnNonFG} is that to understand whether such shifts are dense, the first question is whether they even exist.

The above theorem also implies the following interesting special property of non-finitely generated groups, \cite[Corollary~1.8]{DoMeTs25}, which further motivates the study of generic actions of such groups:

\begin{theorem}
Let $G$ be a countable group which is not finitely generated. If there exists a comeager conjugacy class in the space of Cantor actions, then the generic element is minimal.
\end{theorem}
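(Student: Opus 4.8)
The plan is to deduce the statement by assembling the two results quoted immediately above, with the genuine work being to connect the hypothesis of a comeager conjugacy class to the symbolic-dynamical notion of a projectively isolated sofic shift. Since minimality is invariant under topological conjugacy, and the comeager conjugacy class is in particular a single conjugacy class, it suffices to show that one (hence every) representative $X$ of this class is a minimal system: then the whole class consists of minimal systems, and being comeager it witnesses that the generic action is minimal. So the target reduces to the single assertion that the representative of the comeager class is minimal.

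First I would make explicit the dictionary, implicit in the theorem of Doucha \cite{Do24} quoted above, between conjugacy classes of Cantor $G$-actions and subshifts. In that correspondence, the hypothesis that a comeager conjugacy class exists is by definition the statement that a \emph{generic} conjugacy class exists, which Doucha's theorem renders equivalent to the density of projectively isolated sofic shifts in the space of $G$-subshifts. I would use the correspondence in the finer form extractable from the proof of that equivalence: the \emph{$G_\delta$} character of the conjugacy class translates into the projective-isolation property of the representing subshift (the existence of a Hausdorff-neighborhood $U$ on which the factor map still maps exactly onto the image), while soficity is forced by the same translation. In this way the representative $X$ of the comeager class is identified as a genuine projectively isolated sofic shift, not merely as a point near which such shifts accumulate.

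With $X$ so identified, I would invoke the preceding theorem of Doucha, Melleray and Tsankov \cite{DoMeTs25}, which asserts that on a group that is \emph{not} finitely generated every projectively isolated sofic shift is minimal. This immediately yields that $X$, and hence every member of the comeager conjugacy class, is minimal; since the class is comeager, the generic element is minimal, which is exactly the claim.

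I expect the main obstacle to be the middle step: verifying that the representative of a comeager conjugacy class is \emph{itself} a projectively isolated sofic shift, as opposed to the weaker output of Doucha's theorem that such shifts are merely dense. Securing this requires unwinding the proof of the equivalence to confirm that isolation of the conjugacy class in the action space matches projective isolation of the representing subshift, and that the comeager hypothesis supplies both the $G_\delta$ (isolation) and the dense components. Once this translation is in place, the remaining reduction to the non-finitely-generated minimality theorem is routine, as minimality is a conjugacy invariant and the comeager set is contained in the set of minimal actions.
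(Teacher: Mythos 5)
This statement is quoted in the paper from \cite[Corollary~1.8]{DoMeTs25} without proof; the paper only remarks that it ``is implied by'' the preceding theorem (that projectively isolated sofic shifts on non-finitely-generated groups are minimal). So your high-level plan --- combine Doucha's equivalence with that minimality theorem --- is the intended one. However, your execution has a genuine gap in exactly the place you flag as the ``main obstacle,'' and I do not think it can be closed in the form you propose.

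The problematic step is the claim that the representative of the comeager conjugacy class can be ``identified as a genuine projectively isolated sofic shift.'' A projectively isolated sofic shift is in particular a subshift, i.e.\ an expansive action, and there is no reason the generic Cantor action should be expansive; indeed for $G = \Z$ the generic homeomorphism of the Cantor set (Kechris--Rosendal, Akin--Glasner--Weiss) has infinitely many periodic points of each period and is not conjugate to any subshift. Doucha's theorem, as stated, only converts the existence of a comeager class into \emph{density} of projectively isolated sofic shifts among subshifts; it does not place the generic action itself inside that family, and ``unwinding the proof of the equivalence'' is doing all the work in your argument without any indication of how. The standard way to finish is different: minimality is a conjugacy-invariant $G_\delta$ condition on the space of actions (for each nonempty clopen $U$, ``some finite union of translates of $U$ covers $X$'' is open), so it suffices to show that minimal actions are \emph{dense}, whence they form a comeager set that must meet the comeager conjugacy class; density of minimal actions is what one extracts from the density of projectively isolated sofic shifts together with \cite[Theorem~1.7]{DoMeTs25} (equivalently, the generic action is an inverse limit of projectively isolated sofic shifts, each minimal, and an inverse limit of minimal systems along factor maps is minimal). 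Your first paragraph correctly reduces to showing one representative is minimal, but the route you choose to that representative --- forcing it to \emph{be} a projectively isolated sofic shift rather than merely to be approximated by them --- is the step that fails.
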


\subsection{Motivation 2}

Subshifts of finite type are the main object of study in the field of symbolic dynamics. The beginning of symbolic dynamics on groups can be traced to Berger's 1966 paper \cite{Be66}, where he showed that there exists a $\Z^2$-SFT where every point has free orbit. The existence of such SFTs on various groups $G$ is nowadays one of the main topics of study in symbolic dynamics, where they are often called \emph{strongly aperiodic}.

Strongly aperiodic SFTs are indeed known to exist on a large class of groups \cite{Ba19,BaSa19,SaScUg20,EsMo20,BaSaSa21,CoGoRi22,BaSa24}. These range from groups with simple geometry like Baumslag-Solitar groups \cite{EsMo20} and the lamplighter group \cite{BaSa24}, to more exotic examples like all hyperbolic groups \cite{CoGoRi22}, Thompson's $V$ \cite{BaSaSa21} and the Grigorchuk group \cite{Ba19}. They have been conjectured to always exist on finitely-generated one-ended groups with decidable word problem.

Much of the study of SFTs is performed on finitely-generated groups, for the simple reason that the defining clopen set can only ``see'' finitely many group elements, and thus it is an exercise to show that every SFT $X$ on a group $G$ that is not finitely-generated actually comes from an SFT $Y$ on a finitely-generated subgroup $H \leq G$, with independent points of $X$ on different cosets of $H$ (this is the notion of induction defined in Definition~\ref{def:Induction}).

Nevertheless, it was recently observed by Barbieri that there can be interesting interplay between the group structure and the SFT, in particular subshifts of finite type can be strongly aperiodic on groups that are not finitely generated \cite{Ba23}. This suggests that it might be interesting to look at other dynamical properties of SFTs on non-finitely-generated groups, similarly leveraging the specific way the cosets live on the group.

One such important dynamical property is minimality. This property is particularly interesting because it is nontrivial but possible to establish on many groups. In the case of $\Z^2$-SFTs, examples are given in for example \cite{Mo89}, and they are known on many other groups as well. On finitely-generated free groups, minimal SFTs are trivial to find (the boundary action is an example).

This provides another strong motivation for particularly Question~\ref{q:MinimalSFTOnNonFG}, but also Question~\ref{q:MinimalSoficOnNonFG}, as sofic shifts similarly always arise from induction from a finitely-generated subgroup (since their SFT covers do).

\section{Definitions}

If $G$ is an infinite countable group, a \emph{$G$-system} is a compact metrizable zero-dimensional space $X$ together with a continuous action of $G$. Usually the zero-dimensional space $X$ is Cantor space $\{0,1\}^\omega$ (or a set homeomorphic to it). %An element $x \in \Omega$ is a function $x : \N \to \{0,1\}$, and we write $x_n = x(n)$. The \emph{antidiagonal} of a Cartesian power $X^k$ is the set of tuples $(x_1, \ldots, x_k)$ such that $x_i \neq x_j$ for $i \neq j$.

Two $G$-systems $G \curvearrowright X, G \curvearrowright Y$ are \emph{isomorphic} if there is a $G$-commuting homeomorphism $\phi : X \to Y$. More generally, a $G$-commuting continuous function $\phi : X \to Y$ is called a \emph{morphism}, and if surjective then it is called a \emph{factor map}, $Y$ is a \emph{factor} of $X$, and $X$ is a \emph{cover} of $Y$.

Let $A$ be a finite set (called the \emph{alphabet}). If $G$ is a countable infinite group. The \emph{full shift} on alphabet $A$ is the Cantor space $A^G$ (under the product topology) under the continuous $G$-action $gx_h = x_{g^{-1}h}$. Note that $A^G$ is the set of functions from $G$ to $A$, and we write $x_h = x(h)$ for such functions. We denote restriction of $y \in A^G$ to a subset $H$ by $y|H$. A \emph{pattern} is $p \in A^D$ for finite $D \subset G$.

A \emph{subshift} is a topologically closed $G$-invariant set $X \subseteq A^G$. A \emph{subshift of finite type} or \emph{SFT} is a subshift of the form $\{x \in A^G \;:\; \forall g \in G: gx \notin C\}$, where $C \subseteq A^G$ is clopen. Equivalently, a subshift is defined by removing from $\{0,1\}^G$ the points that contain a translate of a pattern from some set $\mathcal{F}$ of \emph{forbidden patterns}, and SFT if $\mathcal{F}$ can be chosen finite. % A subshift is \emph{sofic} if it is a factor of an SFT.

 A system is \emph{SFT covered} if some SFT covers it. It is known that a subshift is up to isomorphism precisely a $G$-system where the $G$-action is \emph{expansive} meaning 
\[ \exists \epsilon > 0: \forall x, y \in X: x \neq y \implies \exists g \in G: d(gx, gy) > \epsilon. \]
We mostly blur the difference between subshifts and expansive zero-dimensional systems. An SFT covered subshift is called \emph{sofic}.

Let $G \curvearrowright X$ be an action. The \emph{faithful quotient} of the action is $G/K$ where $K$ is the pointwise stabilizer of the action. Then we have a natural action $G/K \curvearrowright X$ by $gK \cdot x = g \cdot x$. This new action is \emph{faithful}, i.e.\ no group element fixes every point of $X$.

\section{Computable actions, SFT covers and self-simulability}

When $X$ is the Cantor space $\Omega = \{0,1\}^\omega$, a homeomorphism $f : \Omega \to \Omega$ is \emph{computable} if there exists a Turing machine $T$ such that $T^x(n) = f(x)_n$ for all $x \in \Omega$. By this we mean that $T$ computes the $n$th bit of the $f$-image of $x$ on input $n$, given oracle access to $x$. An action of a finitely-generated group $G$ on $\Omega$ is \emph{computable} if the action of every $g \in G$ is computable. Homeomorphisms that are computable are closed under composition and inversion, so it suffices to check this for a generating set of $G$. Often, computable homeomorphisms are called \emph{effective}, but we avoid this term, as it is sometimes used as a synonym for faithful.

A group $G$ is \emph{self-simulable} if every computable action of $G$ has an SFT cover. The following is the proved in \cite{BaSaSa21}:

\begin{lemma}
\label{lem:Simulable}
Let $G, H$ be infinite finitely-generated nonamenable groups. Then $G \times H$ is self-simulable.
\end{lemma}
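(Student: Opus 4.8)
The plan is to unwind the definition of self-simulability into a concrete simulation statement, and then feed in the geometry supplied by nonamenability. Fix a computable action $G \times H \curvearrowright \Omega$ on Cantor space; I want to produce an SFT on $G \times H$ that factors onto it. The first step is a coding reduction: since the action is computable, I would record the orbit of a point of $\Omega$ as a configuration over $G \times H$ and observe that the resulting subshift is \emph{effectively closed} (its set of forbidden patterns is $\Pi^0_1$, uniformly computed from the Turing machines implementing the generators of $G$ and $H$), and that $\Omega$ is a factor of it. Thus it suffices to prove the sharper statement that every effectively closed subshift on $G \times H$ is sofic.

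Second, I would extract a combinatorial skeleton. By Whyte's theorem, a finitely generated group is nonamenable precisely when it admits a translation-like action of the free group $F_2$; applying this to $G$ and to $H$ separately and taking the product yields a translation-like action of $F_2 \times F_2$ on $G \times H$. The point of this is that $F_2 \times F_2$ carries embedded two-dimensional grids, and it is on such grids that the classical simulation theorems (Hochman, Aubrun--Sablik, Durand--Romashchenko--Shen) operate: effectively closed $\Z^d$-subshifts become sofic after adjoining a coordinate, because a self-similar hierarchical tiling of Robinson type can host the space-time diagram of a universal machine that enumerates the forbidden patterns.

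Third, and this is the core construction, I would assemble the SFT in layers. One layer guesses the translation-like $F_2 \times F_2$ structure as a local edge-labeling, with finite local rules certifying that it genuinely is a translation-like action (consistency of the partial bijections defining the generators). A second layer superimposes, along the grid directions furnished by the first layer, a Robinson-style self-simulating hierarchy whose tiles run a universal machine enumerating the $\Pi^0_1$ set of forbidden patterns. A third layer carries a candidate configuration of the effectively closed subshift, tied by local rules to the computation so that any occurrence of a forbidden pattern is detected and the configuration rejected. Projecting onto the third layer then gives a factor map onto the effectively closed subshift, and composing with the reduction of the first step gives a factor map onto $\Omega$, so $\Omega$ is sofic.

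I expect the heart of the difficulty to be this third step, and specifically the tension between the genuinely tree-like geometry of a translation-like action and the demand for finitely many, purely local, SFT constraints. Unlike $\Z^2$, the orbits here branch, there is no canonical global grid, and the translation-like structure is only coarsely defined, so the hierarchy of computations must be forced to appear correctly and densely across all of $G \times H$ from local checks alone. This is exactly where nonamenability does essential work: the exponential room and free sub-semigroups guarantee enough space to fit computations of unbounded size and to escape the F\o lner-type rigidity that makes the analogous statement fail on amenable groups. Controlling the propagation and self-correction of the hierarchy on this branching geometry, while keeping the guessed action layer locally verifiable, is the step I would budget the most care for.
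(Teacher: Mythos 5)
First, a point of comparison with the paper: the paper does not prove this lemma at all --- it imports it from \cite{BaSaSa21}, where it is the main theorem and its proof occupies most of that article. So your blind attempt is being measured against an entire research paper rather than a short argument, and it should be judged accordingly: what you have written is a plausible top-level outline, not a proof. To your credit, the first two steps of your outline do match genuine ingredients of the real argument --- the reduction from computable actions to effectively closed ($\Pi^0_1$) subshifts is standard and correct, and Whyte's theorem supplying translation-like actions of $F_2$ on each nonamenable factor is indeed how nonamenability enters in \cite{BaSaSa21}.

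The gap is your third step, and you have located it yourself: everything from ``this is the core construction'' onward describes what would need to be proved rather than proving it. Two concrete obstructions. (1) An SFT layer can locally certify that each generator of $F_2 \times F_2$ acts by a bounded-displacement bijection of $G \times H$, but it \emph{cannot} locally certify that the resulting orbit graphs are genuine Cayley graphs of $F_2 \times F_2$: freeness of the guessed action is not a local condition, so the structure on which you intend to run computations may contain unexpected relations and fail to carry the grids you need; any correct construction must either force this indirectly or be robust against it. (2) The plan of superimposing a Robinson-type self-similar hierarchy is a $\Z^2$ technique; on the coarse, branching, non-canonical geometry of a translation-like orbit there is no aligned hierarchy of squares to propagate, and the published construction does not proceed by transplanting Robinson tilings --- it exploits the product-of-trees structure in a different and substantially more delicate way. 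Identifying the right ingredients is not the same as assembling them, and as written the entire technical content of the lemma remains open in your proposal.
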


There is also the following variant from \cite{Ba19}. If $\pi : G \to H$ is a group homomorphism, then the \emph{pullback} of an $H$-system $H \curvearrowright X$ is the $G$-system $G \curvearrowright X$ given by $g \cdot x := \phi(g) \cdot x$ (where the action on the left is the new one of $G$, and the one on the right the old one of $H$).

\begin{lemma}
\label{lem:Simulable2}
Let $G, H, K$ be infinite finitely-generated groups. Then for every computable $G$-subshift, its pullback to $G \times H \times K$ is sofic.
\end{lemma}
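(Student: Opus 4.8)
The plan is to realize the pullback explicitly as a subshift on $\Gamma := G \times H \times K$ and then exhibit an SFT cover, following the geometric simulation technique of \cite{Ba19}. First I would take $X \subseteq A^G$ and define $\iota : X \to A^\Gamma$ by $\iota(x)(g,h,k) = x(g)$. Writing the projection $\pi : \Gamma \to G$ and letting $\Gamma$ act on $X$ through $\pi$, a direct check with the convention $(gx)(h) = x(g^{-1}h)$ shows $\iota$ is a $\Gamma$-equivariant topological embedding, so the pullback is conjugate to the subshift $\tilde X := \iota(X)$, consisting of those $y \in A^\Gamma$ that depend only on the $G$-coordinate and whose $G$-profile lies in $X$. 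Expansiveness, hence the fact that $\tilde X$ really is a subshift, is inherited from the subgroup $G \times \{1\} \times \{1\} \leq \Gamma$, on which $\Gamma$ acts by the original (expansive) $G$-action.

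Next I would build the SFT cover $Z \subseteq B^\Gamma$ in two superimposed layers. The data layer carries symbols of $A$ and is constrained by finitely many local rules --- one for each generator of $H$ and of $K$ (all finite, since $H$ and $K$ are finitely generated) --- forcing the symbol to be constant along the $H$- and $K$-directions, so that every admissible data configuration depends only on the $G$-coordinate. Membership of the resulting $G$-profile in $X$ cannot be imposed directly, because $X$ is not of finite type: its constraints form only a computably enumerable family of forbidden patterns. Hence the second layer is a computation layer whose job is to verify that no forbidden pattern of $X$ occurs. The crucial structural input is that $H \times K$, being a product of two infinite finitely generated groups, contains arbitrarily large commuting ``grids'', with one infinite direction supplied by $H$ and one by $K$. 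Along such a grid I would lay out a space--time diagram of a Turing machine that enumerates the forbidden patterns of $X$ and, using local rules that shuttle $G$-indexed data between neighboring $G$-positions, searches finite windows of the $G$-profile for an occurrence; local SFT rules enforce correct one-step evolution of the machine, and a designated failure state is globally forbidden.

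The main obstacle is precisely this computation layer. On a general group one cannot fall back on the self-similar substitution systems available on $\Z^2$, so the machine layout and its synchronization with the data must be carried out in a manner robust to the (possibly complicated) geometry of $H$ and $K$; this robustness is exactly the content of Barbieri's geometric simulation theorem. The role of having two extra infinite factors $H$ and $K$ rather than one is to furnish the two independent commuting infinite directions needed to encode unbounded tape and unbounded time, and it is worth stressing that, in contrast with Lemma~\ref{lem:Simulable}, no amenability or nonamenability hypothesis is required. Granting that construction, the projection $Z \to \tilde X$ that forgets the computation layer is the desired factor map: any configuration surviving the computation rules has a $G$-profile avoiding all forbidden patterns, hence lying in $X$, while conversely every point of $\tilde X$ can be decorated with a valid non-failing computation. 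Therefore $\tilde X$ is sofic, proving the lemma.
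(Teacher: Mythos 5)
The paper does not actually prove this lemma---it imports it directly from \cite{Ba19}---and your outline is a faithful high-level description of the construction in that reference: realize the pullback as the subshift of configurations constant along the $H$- and $K$-directions (enforceable by finitely many local rules), and add a computation layer on the $H\times K$ ``grid'' that enumerates and excludes the forbidden patterns of $X$, with the genuinely hard part (making this work for arbitrary infinite finitely generated $H$ and $K$, which need not contain infinite-order elements, so there are no literal infinite cyclic directions to lay a tape along) explicitly deferred to Barbieri's geometric simulation theorem, exactly as the paper defers it. So you are taking essentially the same route; just be aware that what you have written is an annotated citation rather than a self-contained proof, since the robustness of the computation layer to the geometry of $H$ and $K$ is the entire content of the cited theorem.
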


\section{Antidiagonal minimality}

%The \emph{diagonal} of a direct power $\Omega^k$ of a set $\Omega$ is $\{(x, \ldots, x) \;:\; x \in \Omega\}$, and th.
If $G \curvearrowright \Omega$, then $G$ has a \emph{diagonal action} on the Cartesian power $\Omega^n$ by $g \cdot (x_1, \ldots, x_n) = (gx_1, \ldots, gx_n)$. The \emph{antidiagonal} is $\{(x_1, \ldots, x_n) \;:\; \forall i \neq j: x_i \neq x_j\}$.

\begin{definition}
An action of a group on Cantor space $\Omega$ is \emph{antidiagonally minimal} if its diagonal action on the antidiagonal of $\Omega^n$ is minimal for all finite~$n$.
\end{definition}

This is equivalent to the following: for any $n$, for any tuple $(x_1, \ldots, x_n) \in X^n$ of distinct points, and for any $n$-tuple of nonempty open sets $(U_1, \ldots, U_n)$, there exists $g \in G$ such that $\forall i: gx_i \in U_i$.

It can be shown that the definition implies that the action is minimal on the antidiagonal of $\Omega^\kappa$ for any cardinal $\kappa$.

\begin{lemma}
\label{lem:ADMinimal}
Every antidiagonally minimal action is minimal.
\end{lemma}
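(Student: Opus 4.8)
The plan is to specialize the definition of antidiagonal minimality to the case $n = 1$, where the statement collapses into exactly the definition of minimality. First I would observe that the antidiagonal of $\Omega^n$ is defined by the condition $\forall i \neq j: x_i \neq x_j$. When $n = 1$ there are no pairs of distinct indices in $\{1\}$, so this condition is vacuously satisfied by every point. Consequently the antidiagonal of $\Omega^1$ is all of $\Omega$, and the diagonal action of $G$ on it is literally the original action $G \curvearrowright \Omega$. Since antidiagonal minimality asserts minimality of the diagonal action on the antidiagonal of $\Omega^n$ \emph{for all} finite $n$, the instance $n = 1$ gives precisely the minimality of $G \curvearrowright \Omega$, which is the desired conclusion.

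Alternatively, I would invoke the equivalent characterization stated just after the definition, again with $n = 1$: for any single point $x_1 \in X$ and any nonempty open set $U_1$, there exists $g \in G$ with $g x_1 \in U_1$. This is exactly the assertion that every orbit is dense, i.e.\ that the action is minimal. There is no genuine obstacle in this argument; the lemma simply records that ordinary minimality is the $n = 1$ case of antidiagonal minimality, and the only point requiring (minimal) care is the vacuous-quantifier observation that the one-dimensional antidiagonal is the whole space.
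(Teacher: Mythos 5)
Your proof is correct and is essentially the paper's own argument: the paper simply notes the lemma is the $n=1$ case of the definition (where the antidiagonal condition is vacuous, so the antidiagonal of $\Omega^1$ is all of $\Omega$). Your write-up just spells out the vacuous-quantifier point explicitly, which is fine.
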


\begin{proof}
This is the case $k = 1$ of the definition.
\end{proof}

\begin{lemma}
\label{lem:ADExpansive}
Any antidiagonally minimal action is expansive.
\end{lemma}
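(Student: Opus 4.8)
The plan is to apply the $n=2$ case of the antidiagonal minimality property, exploiting the fact that Cantor space contains clopen sets that are uniformly separated in the metric. First I would unwind the definition of expansiveness: I need to exhibit a single constant $\epsilon > 0$ such that for every pair of distinct points $x \neq y$ in $\Omega$ there is a group element $g$ with $d(gx, gy) > \epsilon$.

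The key observation is that in Cantor space $\Omega = \{0,1\}^\omega$ there exist two disjoint nonempty clopen sets separated by a positive distance. Concretely, I would fix
\[
V_0 = \{ z \in \Omega : z_0 = 0 \}, \qquad V_1 = \{ z \in \Omega : z_0 = 1 \};
\]
these are clopen, nonempty, and disjoint, and with the standard ultrametric every point of $V_0$ lies at distance $1$ from every point of $V_1$. I would then set $\epsilon = 1/2$ (any value strictly below the separation distance works).

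Now I would invoke the reformulation of antidiagonal minimality recorded just after the definition, in the case $n = 2$: for any tuple $(x, y)$ of distinct points and any pair of nonempty open sets $(U_1, U_2)$, there is $g \in G$ with $gx \in U_1$ and $gy \in U_2$. Applying this with $(U_1, U_2) = (V_0, V_1)$ produces $g$ with $gx \in V_0$ and $gy \in V_1$, whence $d(gx, gy) = 1 > \epsilon$. Since $x \neq y$ were arbitrary and $\epsilon$ was fixed in advance, this is exactly expansiveness.

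There is no serious obstacle here; the single point needing care is that one must use \emph{separation}, not mere disjointness, of the two clopen sets, since it is the metric inequality that upgrades the topological conclusion of antidiagonal minimality to the quantitative statement required for expansiveness. For an abstract zero-dimensional compact metric model of $\Omega$ one would instead fix any two disjoint nonempty clopen sets $V_0, V_1$ and take $\epsilon$ below their distance, which is positive by compactness; the argument is otherwise verbatim the same.
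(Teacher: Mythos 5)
Your proposal is correct and follows essentially the same route as the paper: fix a nontrivial clopen partition of the space, note that the two pieces are at positive distance by compactness/zero-dimensionality, and use the $n=2$ case of antidiagonal minimality to send any distinct pair $(x,y)$ into opposite pieces. The only difference is that you make the partition concrete via the first coordinate, which the paper leaves abstract; the argument is otherwise identical.
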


\begin{proof}
Let $G \curvearrowright X$ be an antidiagonally minimal action. It suffices to show that for some nontrivial clopen partition $U \sqcup V = X$ ($\sqcup$ denotes disjoint union), for all $x \neq y$ there exists $g \in G$ such that $gx \in U, gy \in V$, since by zero-dimensionality the distance between $U$ and $V$ is positive. But the existence of such $g$ is immediate from antidiagonal minimality, for any choice of partition $(U, V)$.
\end{proof}

\begin{lemma}
\label{lem:ADCenterless}
Let $G \curvearrowright X$ be a faithful antidiagonally minimal action on Cantor space $X$. Then $G$ has no center.
\end{lemma}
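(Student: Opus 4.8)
The plan is to argue by contradiction: assume $z \in G$ lies in the center and $z \neq 1_G$, and derive a contradiction with antidiagonal minimality (unless we instead violate faithfulness). The key observation is that a central element acts on $X$ as a homeomorphism that commutes with the entire $G$-action; that is, $z$ is a $G$-equivariant self-homeomorphism of the system. First I would record that, since the action is faithful and $z \neq 1_G$, the homeomorphism $p \mapsto zp$ is not the identity, so there is a point $p_0 \in X$ with $z p_0 \neq p_0$.

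Next, I would pass to the graph $A = \{(p, zp) : p \in X\} \subseteq X \times X$. Because $z$ is a continuous map into a Hausdorff space, $A$ is closed in $X \times X$; because $z$ is central, for every $g \in G$ we have $g \cdot (p, zp) = (gp, z(gp)) \in A$, so $A$ is invariant under the diagonal $G$-action. The antidiagonal $\{(p,q) : p \neq q\}$ is open in $X \times X$, so $A \cap \{(p,q) : p \neq q\}$ is relatively closed in the antidiagonal, is $G$-invariant, and is nonempty, since it contains $(p_0, z p_0)$.

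Then I would invoke antidiagonal minimality in the case $n = 2$: the diagonal action on the antidiagonal of $X^2$ is minimal, so its only nonempty closed invariant subset is the whole antidiagonal. Hence $A$ contains every pair $(p, q)$ with $p \neq q$; in particular, fixing $p_0$, every $q \neq p_0$ would have to equal the single point $z p_0$. Since Cantor space is infinite, there are at least two distinct such $q$, a contradiction. Therefore $z = 1_G$, and the center of $G$ is trivial.

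The main (and essentially only) obstacle is the clean observation that centrality turns $z$ into a $G$-equivariant map whose graph is a \emph{closed, diagonally invariant} subset meeting the open antidiagonal; once this is in place, minimality on the antidiagonal does all the work. I would double-check the two edge cases that make the conclusion genuinely contradictory: that $A$ really meets the open antidiagonal (guaranteed by faithfulness together with $z \neq 1_G$), and that $X$ has more than two points (true for Cantor space), so that ``$A$ contains the whole antidiagonal'' is indeed impossible for a graph.
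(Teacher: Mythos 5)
Your proof is correct, and it takes a genuinely different route from the paper's. The paper argues directly at the level of group elements: it picks a point $y$ not fixed by the central element $a$, a clopen $U\ni y$ with $aU\cap U=\emptyset$, and uses the $n=2$ case of antidiagonal minimality to produce a $b$ sending a suitable pair of points into $U$, so that $ab$ and $ba$ visibly disagree on some point --- i.e., it explicitly manufactures a witness to non-commutation. You instead package centrality as the statement that $p\mapsto zp$ is a $G$-equivariant homeomorphism, observe that its graph is a closed diagonally invariant subset of $X\times X$ meeting the antidiagonal (by faithfulness), and let minimality of the antidiagonal force the graph to contain all off-diagonal pairs, which is absurd once $X$ has three points. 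Both arguments use only the $n=2$ case; yours is the more structural one and in fact proves the slightly stronger statement that the system admits no nonidentity equivariant self-homeomorphism at all, while the paper's is more elementary in that it never invokes the ``closed invariant subset'' formulation of minimality, only the pointwise transitivity property. All the small checks you flag (the graph is closed, the intersection with the open antidiagonal is relatively closed and nonempty, and $|X|\geq 3$) are exactly the right ones, and they all hold here.
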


\begin{proof}
Let $a \in G$ be nontrivial, and use faithfulness to find $x \in X$ such that $ax = y \neq x$. Observe that then $y$ is not a fixed point of $a$, so there is an open set $U \ni a$ such that $aU \cap U = \emptyset$. Consider any third point $z \in X$ and use antidiagonal minimality to obtain $b \in G$ such that $by \in U$ and $bz \in U$. If $a, b$ commute, then $bax = by \in U$ and $abz \in aU$, a contradiction since $aU \cap U = \emptyset$.
\end{proof}

\section{Thompson's $V$}

The \emph{cylinders} in Cantor space $X = \{0,1\}^\omega$ are
\[ [w] = \{x \in X\;:\; x|{\{0, \ldots, |w|-1\}} = w\}. \] They can be identified with finite words $w \in \{0,1\}^*$. A \emph{complete prefix code} is $U \subset \{0,1\}^*$ such that $\{[w] \;:\; w \in U\}$ is a partition of Cantor space. Note that by compactness, a complete prefix code is finite.

\begin{definition}
Let $A, B \subset \{0,1\}^*$ be complete prefix codes of the same cardinality, and let $\phi : A \to B$ be a bijection. Then define a homeomorphism by replacing the unique prefix in $A$ by the $\phi$-image:
\[ \forall u \in A: \forall x \in \{0,1\}^\omega: f_{\phi}(ux) = \phi(u)x. \]
Thompson's $V$ consists of precisely such homeomorphisms, for triples $U, V, \phi$. The defining action of $V$ on $\{0,1\}^\omega$ is called the \emph{natural action}.
\end{definition}

We now list several (well-known) properties of $V$ and its natural action.

\begin{lemma}
\label{lem:VEffective}
The natural action of $V$ on Cantor space is computable.
\end{lemma}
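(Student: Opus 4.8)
The plan is to show that each generator of Thompson's $V$ acts as a computable homeomorphism, and then invoke the fact (stated in the section on computable actions) that computable homeomorphisms are closed under composition and inversion, so it suffices to verify computability on a generating set. Since every element of $V$ is by definition a map $f_\phi$ determined by a triple $(A, B, \phi)$ where $A, B$ are complete prefix codes and $\phi : A \to B$ is a bijection, I would in fact argue directly that \emph{every} such $f_\phi$ is computable, which is even cleaner than restricting to generators.

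The key steps are as follows. First I would recall that because $A$ is a complete prefix code, it is finite (by the compactness remark already made in the text) and the cylinders $\{[w] : w \in A\}$ partition $\{0,1\}^\omega$. Thus for any input $x \in \{0,1\}^\omega$, there is a \emph{unique} $u \in A$ with $x \in [u]$, i.e.\ $u$ is a prefix of $x$. Given oracle access to $x$, a Turing machine can find this $u$ by reading the first $\max_{w \in A}|w|$ bits of $x$ (a bounded, finite amount of oracle queries) and comparing against the finitely many codewords in $A$; the prefix-code property guarantees exactly one match. Having identified $u$, the machine computes $\phi(u) \in B$ by table lookup (the finite bijection $\phi$ is hard-coded), and then $f_\phi(x) = \phi(u) \cdot (x \text{ with prefix } u \text{ removed})$.

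To produce the $n$th output bit $f_\phi(x)_n$ on input $n$, the machine proceeds as above to determine $u$ and $\phi(u)$, then: if $n < |\phi(u)|$, it outputs the $n$th bit of the fixed finite word $\phi(u)$; otherwise it outputs the bit $x_{|u| + (n - |\phi(u)|)}$, which requires a single oracle query to $x$. Every operation here — reading finitely many bits, matching against a finite code, table lookup, and one further oracle query — is plainly computable, so $T^x(n) = f_\phi(x)_n$ is realized by a Turing machine, establishing that $f_\phi$ is computable.

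I do not expect a genuine obstacle in this argument; the only point demanding care is the bookkeeping that identifying the prefix $u$ requires reading only a bounded number of bits of $x$, which is exactly where finiteness of the complete prefix code (hence a uniform bound on codeword lengths) is used. Once that is observed, computability of each $f_\phi$, and hence of the whole natural action, follows immediately.
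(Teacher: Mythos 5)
Your proposal is correct and follows essentially the same approach as the paper's proof: read a bounded prefix of the oracle to identify the unique codeword $u \in A$, look up $\phi(u)$ in a hard-coded table, and output either a bit of $\phi(u)$ or the oracle bit at index $n - |\phi(u)| + |u|$, which matches the paper's index computation exactly. No issues.
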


\begin{proof}
This is morally obvious, since the action is defined explicitly. We describe the concrete behavior of the Turing machine corresponding to a particular element $g$. Let $A, B \subset \{0,1\}^*$ be the sets of words as in the definition of $g$, and $\phi : A \to B$ be the bijection. On input $n$ and oracle $x$, the Turing machine reads $\max_{u' \in A} |u'|$ bits from $x$ to determine its unique prefix $u \in A$, and obtains $v = \phi(u)$ from a look-up table. If the input is $n < |v|$, the Turing machine outputs $v_n$, and if $n \geq |v|$ it reads the bit $x_{n - |v| + |u|}$ from the oracle and outputs it. Then clearly on oracle $uy$ with $u \in A$, we have 
\[ T^{uy}(0)T^{uy}(1)T^{uy}(2)\ldots = \phi(u)y \]
 as desired.
\end{proof}

\begin{lemma}
The standard action of Thompson's $V$ is antidiagonally minimal.
\end{lemma}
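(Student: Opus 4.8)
The plan is to verify the reformulation of antidiagonal minimality given just after the definition: for every $n$, every tuple $(x_1,\dots,x_n)$ of distinct points of $X=\{0,1\}^\omega$, and every tuple $(U_1,\dots,U_n)$ of nonempty open sets, I must exhibit a single $g\in V$ with $gx_i\in U_i$ for all $i$. Since every element of $V$ acts by replacing a finite prefix taken from a complete prefix code, the whole argument reduces to prefix-code bookkeeping: I first want pairwise disjoint source cylinders $[u_i]\ni x_i$ and pairwise disjoint target cylinders $[w_i]\subseteq U_i$, and I then want to glue the assignments $[u_i]\mapsto[w_i]$ into one prefix-replacement homeomorphism $f_\phi$.

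The source side is easy. As the $x_i$ are distinct, for $\ell$ large enough the prefixes $u_i:=x_i|\{0,\dots,\ell-1\}$ are pairwise distinct, hence (having a common length) pairwise incomparable, so the cylinders $[u_i]$ are disjoint and contain the respective $x_i$; enlarging $\ell$ so that $2^\ell>n$ also guarantees the $[u_i]$ do not cover $X$, which I will need below. The target side is the only place requiring care, precisely because the $U_i$ may overlap and so need not contain disjoint cylinders in any obvious way. Here I exploit that Cantor space is perfect: each nonempty open $U_i$ is infinite, so choosing $p_i\in U_i\setminus\{p_1,\dots,p_{i-1}\}$ successively yields distinct points $p_i\in U_i$. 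For $m$ large enough the prefixes $w_i:=p_i|\{0,\dots,m-1\}$ are pairwise distinct (so the $[w_i]$ are disjoint) and satisfy $[w_i]\subseteq U_i$ (since $U_i$ is open and $p_i\in U_i$); again taking $2^m>n$ keeps the $[w_i]$ from covering $X$.

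It remains to assemble $g$. The families $\{u_1,\dots,u_n\}$ and $\{w_1,\dots,w_n\}$ are prefix codes, and their complements $R:=X\setminus\bigcup_i[u_i]$ and $S:=X\setminus\bigcup_i[w_i]$ are nonempty clopen sets, each a finite disjoint union of cylinders. Since splitting a cylinder $[v]$ into $[v0],[v1]$ increases the number of tiles by one, I can refine the two tilings of $R$ and $S$ until they have the same number $N$ of tiles, and then fix any bijection between them. Adjoining these tiles gives complete prefix codes $A\supseteq\{u_i\}$ and $B\supseteq\{w_i\}$ of equal cardinality $n+N$, and I define $\phi:A\to B$ by $\phi(u_i)=w_i$ together with the chosen bijection on the tiles. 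Then $g:=f_\phi\in V$ satisfies $g(u_i z)=w_i z$, so writing $x_i=u_i z_i$ gives $gx_i=w_i z_i\in[w_i]\subseteq U_i$, as required. I expect the only genuine obstacle to be the target step: extracting pairwise disjoint cylinders inside the possibly overlapping $U_i$, which the ``distinct points, then common prefix length'' device resolves, together with the mild care of keeping both cylinder families from covering $X$ so that their complements can be matched up.
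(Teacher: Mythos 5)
Your proof is correct and follows essentially the same route as the paper: take disjoint source cylinders from long common-length prefixes of the distinct $x_i$, disjoint target cylinders inside the (possibly overlapping) $U_i$, and glue the prefix replacements into an element of $V$. The only difference is that you spell out the step the paper dismisses with ``extend arbitrarily to the remaining prefixes'' --- refining the two complement tilings to equal cardinality --- which is a worthwhile detail but not a new idea.
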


\begin{proof}
Let $(x_1, \ldots, x_n) \in X^n$ be a tuple of distinct points, and consider a tuple of open nonempty sets $(U_1, \ldots, U_n)$. If $k$ is large enough, then we can choose cylinders $[u_i] \subset U_i$ so that $u_i$ is not a prefix of $u_j$ for any $i \neq j$ (even if $U_i = U_j$). (I.e.\ the cylinders are disjoint.)

On the other hand, if $k$ is large enough then the $k$-prefixes of the points $x_i$ are distinct, say $x_i = v_iy_i$ where $|v_i| = k$. Now define $g \in V$ as a permutation of the $k$-prefix, by setting $gv_ix = u_ix$ for all $x \in \{0,1\}^\omega$, and extend arbitrarily to the remaining prefixes in $\{0,1\}^k$.
\end{proof}

The following lemmas are immediate from the antidiagonal minimality of the action, and respectively Lemma~\ref{lem:ADMinimal} Lemma~\ref{lem:ADExpansive} and Lemma~\ref{lem:ADCenterless}.

\begin{lemma}
\label{lem:VMinimal}
The natural action of Thompson's $V$ is minimal.
\end{lemma}

\begin{lemma}
\label{lem:VExpansive}
The natural action of Thompson's $V$ is expansive.
\end{lemma}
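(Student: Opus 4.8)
The plan is simply to invoke the machinery already assembled, since the preamble ``The following lemmas are immediate from the antidiagonal minimality of the action'' is a faithful description of the situation. By the (unnumbered) lemma immediately preceding, the standard action of Thompson's $V$ on Cantor space is antidiagonally minimal. Expansiveness is then not something one needs to verify by hand: Lemma~\ref{lem:ADExpansive} already establishes that \emph{any} antidiagonally minimal action is expansive. Composing these two facts yields the statement at once, so the entire proof is a single appeal to Lemma~\ref{lem:ADExpansive}.

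If one wants to make the witness concrete rather than black-box, I would unpack the proof of Lemma~\ref{lem:ADExpansive} in this special case. Fix any nontrivial clopen partition $X = U \sqcup V$, say $U = [0]$ and $V = [1]$, the two depth-one cylinders. Antidiagonal minimality in the case $n = 2$ supplies, for every pair of distinct points $x \neq y$, an element $g \in V$ with $gx \in U$ and $gy \in V$; and for $V$ this $g$ can even be exhibited directly, by choosing prefixes long enough to distinguish $x$ from $y$ and letting $g$ be a prefix-permutation sending one point into $[0]$ and the other into $[1]$, exactly as in the proof that the natural action is antidiagonally minimal. Since $X$ is zero-dimensional, the disjoint clopen sets $U$ and $V$ are at positive distance, and any such $\epsilon > 0$ witnesses expansiveness.

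There is no genuine obstacle here, as this lemma is a bookkeeping corollary of the two results that precede it. The only point worth stating carefully is that expansiveness, unlike the centerless conclusion drawn via Lemma~\ref{lem:ADCenterless}, requires no faithfulness hypothesis, so the bare antidiagonal minimality of the natural action already suffices. The substantive content lives entirely in Lemma~\ref{lem:ADExpansive} and in the antidiagonal minimality of the $V$-action.
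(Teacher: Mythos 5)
Your proof is correct and is exactly the paper's argument: the statement is derived by combining the antidiagonal minimality of the natural action of $V$ with Lemma~\ref{lem:ADExpansive}. The additional unpacking with the concrete partition $[0] \sqcup [1]$ is a faithful instantiation of the proof of Lemma~\ref{lem:ADExpansive} and adds nothing that conflicts with the paper.
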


\begin{lemma}
\label{lem:VCenterless}
Thompson's $V$ is centerless.
\end{lemma}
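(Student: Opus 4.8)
The plan is to derive this directly from Lemma~\ref{lem:ADCenterless}, which asserts that the acting group of any \emph{faithful} antidiagonally minimal action on Cantor space is centerless. All the dynamical work has effectively been done already: the lemma immediately preceding this statement establishes that the natural action of $V$ on $\{0,1\}^\omega$ is antidiagonally minimal. So the only remaining task is to verify the faithfulness hypothesis, and then the conclusion is a one-line application.

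For faithfulness, I would observe that $V$ is \emph{by definition} a group of homeomorphisms of Cantor space: each element is the concrete prefix-replacement map $f_\phi$ attached to a triple $(A,B,\phi)$, and the group operation is composition of these homeomorphisms. Hence distinct elements of $V$ are distinct functions on $\{0,1\}^\omega$, which is exactly the statement that no nontrivial element fixes every point. In other words, the natural action is tautologically faithful, since $V$ was presented as a subgroup of $\mathrm{Homeo}(\{0,1\}^\omega)$ rather than via an abstract action. With faithfulness and antidiagonal minimality both in hand, Lemma~\ref{lem:ADCenterless} applies verbatim and yields that $V$ has trivial center.

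There is essentially no obstacle here, which is why the excerpt flags the result as ``immediate''; the substance is packaged into Lemma~\ref{lem:ADCenterless} and the antidiagonal minimality of the natural action. If one preferred a self-contained argument, the alternative would be to simply inline the proof of Lemma~\ref{lem:ADCenterless}: given a nontrivial $g \in V$, faithfulness produces a point $x$ with $gx = y \neq x$, one chooses a clopen neighborhood $U \ni y$ with $gU \cap U = \emptyset$, and antidiagonal minimality supplies $b \in V$ carrying two chosen points into $U$ so that $g$ and $b$ cannot commute. But since Lemma~\ref{lem:ADCenterless} is already available, I would keep the proof to the two-step verification above rather than reproduce this argument.
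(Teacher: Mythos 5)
Your proposal matches the paper exactly: the lemma is deduced by applying Lemma~\ref{lem:ADCenterless} to the natural action of $V$, using the antidiagonal minimality established just before, with faithfulness being automatic since $V$ is defined as a group of homeomorphisms of $\{0,1\}^\omega$. Your explicit verification of the faithfulness hypothesis is a small but welcome addition to what the paper leaves implicit.
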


\begin{lemma}
\label{lem:FreeInV}
Thompson's $V$ contains a free subgroup on two generators.
\end{lemma}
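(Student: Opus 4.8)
The plan is to exhibit a copy of $F_2$ inside $V$ concretely, via the Table-Tennis (ping-pong) Lemma applied to the natural action on $\{0,1\}^\omega$. I would produce two elements with ``north--south'' dynamics (a single attracting fixed point and a single repelling fixed point) whose attractor/repeller pairs are disjoint, and then pass to high powers to obtain a Schottky pair. This is the standard Tits-alternative-style route, and it fits the explicit prefix-replacement description of $V$ used above.

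First I would write down one explicit hyperbolic element. Let $a \in V$ be defined by the prefix replacement $0 \mapsto 00$, $10 \mapsto 01$, $11 \mapsto 1$; the domain code $\{0,10,11\}$ and range code $\{00,01,1\}$ are complete prefix codes of equal size, so this is a legitimate element of $V$. Inspecting the rules: within $[0]$ one has $a^n(0x') = 0^{n+1}x'$, so $a^n([0]) \subseteq [0^{n+1}]$; a point in $[10]$ maps into $[0]$ after one step; and a point $1^k0\cdots \in [11]$ loses one leading $1$ per step until it reaches $[10]$ and then $[0]$. Hence $a$ fixes $0^\infty$ and $1^\infty$, every $x \neq 1^\infty$ satisfies $a^n x \to 0^\infty$, and dually $a^{-n} x \to 1^\infty$ for every $x \neq 0^\infty$; in particular $a$ is north--south with attractor $0^\infty$ and repeller $1^\infty$, and $a^n([0]) \subseteq [0^{n+1}]$ shows $a$ has infinite order. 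For the second element I would take a conjugate $b = g a g^{-1}$ by an element $g \in V$ moving both fixed points off $\{0^\infty,1^\infty\}$; concretely the involution $g$ swapping $[00] \leftrightarrow [11]$ and fixing $[01],[10]$ sends $0^\infty \mapsto 110^\infty$ and $1^\infty \mapsto 001^\infty$, so $b$ is again north--south, now with attractor $110^\infty$ and repeller $001^\infty$. The four fixed points $0^\infty, 1^\infty, 110^\infty, 001^\infty$ are then pairwise distinct.

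I would then run ping-pong. Choose pairwise disjoint cylinder neighborhoods $A^+ \ni 0^\infty$, $A^- \ni 1^\infty$, $B^+ \ni 110^\infty$, $B^- \ni 001^\infty$, and set $A = A^+ \cup A^-$, $B = B^+ \cup B^-$. Using the uniform north--south convergence (on $\{0,1\}^\omega \setminus [1^k]$ the orbit reaches $[0]$ within $k$ steps and then converges at a controlled rate), I fix $N$ so large that $a^{N}(\{0,1\}^\omega \setminus A^-) \subseteq A^+$ and $a^{-N}(\{0,1\}^\omega \setminus A^+) \subseteq A^-$, and symmetrically for $b^{\pm N}$ with $B^\pm$. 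Since the four neighborhoods are disjoint, an easy induction gives $\alpha^n(B) \subseteq A$ and $\beta^n(A) \subseteq B$ for all $n \neq 0$, where $\alpha = a^N$ and $\beta = b^N$. As $\alpha,\beta$ have infinite order and $A,B$ are disjoint and nonempty, the ping-pong lemma yields $\langle \alpha, \beta\rangle \cong \langle\alpha\rangle * \langle\beta\rangle \cong F_2$, so $V$ contains a free group on two generators.

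The main obstacle is the bookkeeping in the middle step: one must verify that the prefix-replacement maps really do have clean, uniformly attracting north--south dynamics, the delicate point being the behavior on the ``middle'' cylinders $[01]$ and $[11]$, where one checks that iteration does not stall before the orbit enters $[0]$. This is elementary but must be done carefully to justify the uniform convergence used to pick $N$. Alternatively, one could bypass the explicit construction entirely and simply invoke the classical fact that Thompson's $V$ contains nonabelian free subgroups.
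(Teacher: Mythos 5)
Your proof is correct, but it takes a genuinely different route from the paper. The paper also uses ping-pong, but with torsion elements: it exhibits $a$ of order $2$ (swapping $[0]$ and $[1]$) and $b$ of order $3$ (cycling $[0] \to [10] \to [11] \to [0]$), applies the ping-pong lemma to get $\langle a,b\rangle \cong \Z_2 * \Z_3 \cong \mathrm{PSL}(2,\Z)$, and then invokes the classical fact that this group contains a nonabelian free subgroup. That argument is shorter and needs no dynamical estimates, since the ping-pong containments are verified in one step from the prefix-replacement tables; the price is an appeal to an external fact about $\mathrm{PSL}(2,\Z)$. Your Schottky-pair construction produces $F_2$ on the nose and is fully self-contained, at the cost of the north--south bookkeeping you flag yourself: verifying that $a$ given by $0 \mapsto 00$, $10 \mapsto 01$, $11 \mapsto 1$ has uniformly attracting dynamics (which you do correctly --- points of $[1^j0]$ reach $[0]$ in $j$ steps and then contract into $[0^{m+1}]$), conjugating to get a second hyperbolic element with disjoint fixed-point pairs, and passing to high powers. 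Both approaches are valid; yours trades brevity for explicitness and avoids citing any structure theory beyond the ping-pong lemma itself.
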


\begin{proof}
It suffices to show that it contains $\Z_2 \times \Z_3 \cong \mathrm{PSL}(2, \Z)$, as this group is well-known to contain a free group. Choose $A = [0], B = [10] \cup [11]$. Define $a \in V$ by $a(ix) = (1-i)x$ for $i \in \{0,1\}, x \in \{0,1\}^\omega$. Define $B$ as the $3$-rotation with cycle decomposition $(0x \; 10x \; 11x)$. Clearly $a$ generates $\Z_2$ and $b$ generates $\Z_3$ inside Thompson's $V$. The ping-pong lemma applies, as $aB \subset A$ and $bA \cup b^2A \subset B$, and we conclude that $\langle a, b \rangle \cong \Z_2 * \Z_3$.
\end{proof}

The following is proved in \cite{DoHa20} -- it of course implies that $V$ is $2$-generated.

\begin{lemma}
\label{lem:32}
Thompson's $V$ is $\frac32$-generated. In other words, for any nontrivial element $g \in V$, there exists $h \in V$ such that $\langle g, h \rangle = V$.
\end{lemma}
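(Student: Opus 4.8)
The plan is to follow the strategy of Donoven and Harper \cite{DoHa20}, establishing what is usually called \emph{spread at least one}: for each nontrivial $g \in V$ we must produce a single complement $h$ with $\langle g, h\rangle = V$. The backbone of the argument is the classical fact that $V$ is simple together with the classical fact that $V$ is generated by its \emph{prefix transpositions}, each of which swaps two disjoint cylinders $[u]$ and $[v]$ by prefix replacement and fixes the rest of Cantor space. Thus the goal $\langle g, h\rangle = V$ can be reformulated as: the subgroup $\langle g, h\rangle$ contains all such transpositions.

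First I would exploit the given $g \neq 1$ exactly as in the proof of Lemma~\ref{lem:ADCenterless}: by continuity there is a cylinder $[w]$ with $g[w] \cap [w] = \emptyset$, so $g$ carries $[w]$ onto a disjoint clopen set $g[w]$. This isolates a concrete, localizable piece of nontrivial behavior of $g$, independent of how complicated $g$ is elsewhere. Using the strong transitivity built into the definition of $V$ (any bijection between two complete prefix codes of equal size is realized by an element), I would then choose the complement $h$ so that a short word in $g$ and $h$, such as the commutator $hgh^{-1}g^{-1}$, collapses to a \emph{single} prefix transposition $\tau$ supported on and near $[w]$. Producing one transposition inside $\langle g, h\rangle$ is the first milestone.

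The heart of the matter is then a bootstrapping step. Having one transposition $\tau \in \langle g, h\rangle$, I would show that $\langle g, h\rangle$ acts transitively enough on cylinders, via conjugation, that every prefix transposition is a $\langle g, h\rangle$-conjugate of $\tau$; since these generate $V$, this forces $\langle g, h\rangle = V$. This is where $h$ must be engineered with care: it should itself be drawn from a fixed highly transitive family so that $\langle g, h\rangle$ inherits enough of $V$'s transitivity on the ``rational'' structure of Cantor space, and so that its interaction with the displaced cylinder $[w]$ cannot be trapped in a proper subgroup. Simplicity of $V$ is the safety net ruling out the generated group being a nontrivial proper normal object, but it does not by itself exclude the many proper non-normal subgroups, so the transitivity must be verified essentially by hand.

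The step I expect to be the main obstacle is precisely this last one: proving that $\langle g, h\rangle$ is genuinely all of $V$ rather than some proper subgroup that happens to contain $g$ and $\tau$. Unlike in the finite simple group setting, one cannot simply invoke a maximal-subgroup classification, since infinite $V$ has proper subgroups contained in no maximal one. Donoven and Harper handle this by a delicate analysis keyed to the cylinder moved by $g$, and any self-contained proof would have to reproduce a uniform choice of complement and verify that the only subgroup containing the resulting $g$ and $h$ is $V$ itself. This verification, rather than the production of a first transposition, is the real content of the lemma.
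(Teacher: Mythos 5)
First, note that the paper does not prove this lemma at all: it is imported verbatim from Donoven and Harper \cite{DoHa20}, so there is no internal proof to compare your attempt against. Your proposal is therefore being measured as a standalone argument, and as such it has a genuine gap that you yourself identify: the entire content of the lemma is the final ``bootstrapping'' step showing that $\langle g,h\rangle$ is all of $V$ rather than a proper subgroup containing $g$ and one transposition, and your text explicitly defers this step rather than carrying it out. A plan that says ``this verification is the real content of the lemma'' and then stops is not a proof of the lemma.

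There is also a concrete problem earlier in the sketch. Starting from a cylinder $[w]$ with $g[w]\cap[w]=\emptyset$ (which is fine, and mirrors the move in Lemma~\ref{lem:ADCenterless}), you claim that for a suitable $h$ the commutator $hgh^{-1}g^{-1}$ ``collapses to a single prefix transposition.'' The standard computation does not give this: if $h$ is supported in $[w]$, then $hgh^{-1}g^{-1} = h\cdot(gh^{-1}g^{-1})$ is the product of $h$ (supported in $[w]$) and a conjugate of $h^{-1}$ (supported in $g[w]$, disjoint from $[w]$) --- i.e.\ a product of \emph{two} disjointly supported pieces, not a single transposition. Getting a single transposition, and then showing that its $\langle g,h\rangle$-conjugates exhaust all prefix transpositions, requires exactly the delicate uniform choice of $h$ that \cite{DoHa20} devotes most of its argument to; simplicity of $V$ and generation by transpositions, which you correctly invoke, do not by themselves close this. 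If you want a self-contained proof you would need to reproduce that analysis; otherwise the honest move is to do what the paper does and cite \cite{DoHa20} without proof.
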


Much more is known, in particular the generators can be made to have certain finite orders \cite{ScSkWu24}.

\section{Free extension and induction}

Let $X \subset A^K$ be a subshift, and let $K \leq G$. Then we obtain a subshift $Y$ on $A^G$, often called the \emph{free extension}, by using the same forbidden patterns as for $X$. Each forbidden pattern touches only a single left coset of $K$, and in turn clearly on every left coset of $K$ we have a valid point of $X$. Thus, a point $y \in Y$ contains the same information as an element of $X^{G/K}$ by defining $\phi(y)_{rK} = r^{-1}y|K$ for some left coset representatives $r \in R$, where we suppose $1_G \in R$. Note that $\{r^{-1} \;:\; r \in R\}$ form a set of right representatives.

We then have
\[ \phi(gy)_{rK} = r^{-1}gy|K = kt^{-1}y|K = kz_{tK} \]
where $t \in R, k \in K$ are such that $r^{-1}g = kt^{-1}$. This allows us to define the free extension abstractly.

\begin{definition}
\label{def:Induction}
Let $K \curvearrowright X$ be an action (which we write as $(k, x) \mapsto k \cdot x$), and let $K \leq G$. Then the \emph{induction} of the system is the $G$-system $G \curvearrowright X^{G/K}$ such that for $g \in G, z \in X^{G/K}$, we have
\[ (g * z)_{rK} := k \cdot (z_{tK}) \]
(writing the new action as $(g, z) \mapsto g * z$) where $r^{-1}g = kt^{-1}$, and $r, t \in R$.
\end{definition}

We call this abstract variant of the free extension ``induction'', following \cite{BaSa24} (this can be seen as an analog of an induced action in representation theory). The authors of \cite{DoMeTs25} call this ``co-induction'' instead.

Let $H, K$ be groups and let $\phi : H \rightarrow \Aut(K)$ be a homomorphism. We write the input of $\phi$ as subscript. Then we define the semidirect product $G = K \rtimes_\phi H$ by the formula $(k, h) (k', h') = (k \phi_h(k'), hh')$. Typically we see $K$ and $H$ as subgroups of the group $G$ in the obvious way by $K \cong K \times \{1_H\}, H \cong \{1_K\} \times H$. It is easy to check that $K$ is normal in $G$. Note that $(k, h) = kh$ for $k \in K, h \in H$, while $hk = \phi_h(k)h$.

\begin{lemma}
Let $G = K \rtimes_{\phi} H$ be a semidirect product, and suppose $K \curvearrowright X$. Then we can take $H = R$ as a set of representatives for left cosets of $K$. Then on the induction $X^{G/K}$, $(k, h) \in G$ acts on $z \in X^{G/K}$ by the formula
\[ (kh * z)_{rK} = \phi_{r^{-1}}(k) \cdot (z_{h^{-1}r K}). \]
\end{lemma}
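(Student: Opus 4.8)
The plan is to unwind both the general induction formula from Definition~\ref{def:Induction} and the semidirect product multiplication, and then reconcile the two. First I would justify the choice of representatives: since $G = K \rtimes_\phi H$, every element $g \in G$ factors uniquely as $g = k'h'$ with $k' \in K$, $h' \in H$, and since $K$ is normal, the left coset $gK = k'h'K = k'Kh' = Kh' = h'K$ (using normality to slide $K$ past $h'$), so the left cosets of $K$ are exactly $\{hK : h \in H\}$ and $H$ is a legitimate transversal $R$ with $1_G \in R$. This lets me index coordinates of $z \in X^{G/K}$ by elements $r \in H$.

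Next I would specialize the defining formula $(g * z)_{rK} = k \cdot (z_{tK})$, where the decomposition required is $r^{-1}g = k t^{-1}$ with $r, t \in R = H$ and $k \in K$. Here $g = kh$ (the lemma's given element, with $k \in K$, $h \in H$) and the representative is $r \in H$. The key computation is to expand $r^{-1}g = r^{-1}kh$ and push the $K$-part to the left past $r^{-1}$: using the defining relation of the semidirect product, $r^{-1}k = \phi_{r^{-1}}(k) r^{-1}$, so that
\[ r^{-1} g = r^{-1} k h = \phi_{r^{-1}}(k)\, r^{-1} h. \]
To match the required form $k' t^{-1}$ with $k' \in K$ and $t \in H$, I read off $k' = \phi_{r^{-1}}(k) \in K$ (which is indeed in $K$ because $\phi_{r^{-1}}$ is an automorphism of $K$) and $t^{-1} = r^{-1}h$, i.e.\ $t = h^{-1}r \in H$. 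Plugging into the definition gives $(kh * z)_{rK} = k' \cdot (z_{tK}) = \phi_{r^{-1}}(k) \cdot (z_{h^{-1}r\,K})$, which is exactly the claimed formula.

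The only genuine subtlety, and the step I would be most careful about, is the bookkeeping of left versus right and the direction in which $\phi$ and the inverses act; the semidirect product conventions $(k,h)(k',h') = (k\phi_h(k'), hh')$ and the derived relation $hk = \phi_h(k)h$ must be applied in the correct order, and one must confirm that the element $t = h^{-1}r$ genuinely lies in $H$ (it does, as a product of two elements of the subgroup $H$) so that it is a valid representative. Everything else is a direct substitution into the already-established induction formula, so there is no hard content beyond verifying the factorization $r^{-1}g = \phi_{r^{-1}}(k)\,(h^{-1}r)^{-1}$ and checking it satisfies the uniqueness hypotheses of Definition~\ref{def:Induction}.
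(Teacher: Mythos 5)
Your proposal is correct and follows essentially the same route as the paper: specialize the induction formula to $g = kh$, use the semidirect product relation $r^{-1}k = \phi_{r^{-1}}(k)r^{-1}$ to obtain $r^{-1}kh = \phi_{r^{-1}}(k)\,r^{-1}h$, and read off $k' = \phi_{r^{-1}}(k)$ and $t = h^{-1}r$. The extra care you take in verifying that $H$ is a legitimate transversal and that $t \in H$ is implicit in the paper's argument but is a sound addition.
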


\begin{proof}
The action of $G$ on the induction $X^{G/K}$ is then given for $(k, h) \in G, z \in X^{G/H}$ by the formula
\[ (kh * z)_{rK} = (k' \cdot z)_{tK} \]
where $r^{-1}kh = k't^{-1}$, and $r, t \in R$.
Since 
$r^{-1}kh = \phi(r^{-1})(k)r^{-1}h$, we have $r^{-1} h = t^{-1}$ i.e.\ $t = h^{-1}r$, and $k' = \phi(r^{-1})(k)$.
\end{proof}

% We will only need the following simplified statement in the proof:

\begin{lemma}
\label{lem:KAction}
Let $G = K \rtimes_{\phi} H$ be a semidirect product, and suppose $K \curvearrowright X$. Then the $K$-subaction on the induction is the dynamical system with points $X^H$, where $k \in K$ acts by
\[ (k * z)_h = \phi_{h^{-1}}(k) \cdot (z_h). \]
\end{lemma}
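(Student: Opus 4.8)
The plan is to derive this directly from the preceding lemma by specializing its formula to the subgroup $K = K \times \{1_H\} \leq G$. First I would fix the bookkeeping: the induction $X^{G/K}$ has its coordinates indexed by the left cosets $G/K$, and since the preceding lemma lets us take $H = R$ as a complete set of left-coset representatives, there is a natural bijection $X^{G/K} \cong X^H$ under which I write the coordinate indexed by the coset $hK$ simply as $z_h$ for $h \in H$. The $K$-subaction on the induction is, by definition, nothing but the restriction of the $G$-action to this copy of $K$ inside $G$, so the whole content of the lemma is to read off what that restriction does in the $X^H$ coordinates.

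Next I would substitute. The preceding lemma gives, for a general element $kh \in G$ with $k \in K$ and $h \in H$,
\[ (kh * z)_{rK} = \phi_{r^{-1}}(k) \cdot (z_{h^{-1}r K}). \]
An element of $K$ is exactly one whose $H$-component is trivial, so I set $h = 1_H$. Then the shifted index satisfies $h^{-1} r K = rK$, and the formula collapses to $(k * z)_{rK} = \phi_{r^{-1}}(k) \cdot (z_{rK})$. Renaming the representative $r$ to $h$ and passing to the $X^H$ coordinates gives exactly $(k * z)_h = \phi_{h^{-1}}(k) \cdot (z_h)$, which is the claimed formula.

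The only things worth verifying are bookkeeping rather than genuine obstacles: that setting the $H$-component to $1_H$ correctly reduces the shifted index $h^{-1} r K$ back to $rK$, and that the identification $X^{G/K} \cong X^H$ is applied consistently on both sides. As a sanity check that the resulting formula really is an action (which is in any case automatic, being the restriction of the $G$-action), one computes $(k*(k'*z))_h = \phi_{h^{-1}}(k)\cdot(\phi_{h^{-1}}(k')\cdot z_h) = \phi_{h^{-1}}(kk')\cdot z_h = ((kk')*z)_h$, using that each $\phi_{h^{-1}}$ is a group homomorphism. Thus the lemma follows with no essential difficulty beyond the specialization $h = 1_H$.
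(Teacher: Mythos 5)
Your proposal is correct and matches the paper's argument exactly: the paper likewise obtains the formula by specializing the previous lemma to elements with trivial $H$-component, identifying $G/K$ with $H$, and renaming $r$ to $h$. You merely spell out the bookkeeping (that $h^{-1}rK = rK$ when $h = 1_H$) more explicitly than the paper does.
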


\begin{proof}
This is obtained from the previous lemma, by identifying $G/K$ with the set of representatives $H$, dropping $h$ from the formula (considering only an action of $k \in K$), and finally renaming $r$ to $h$.
\end{proof}

In other words, when an action of $K$ is induced to a semidirect product, $K$ acts in the central copy of $X$ (in coordinate $1_H$ of $X^H$) as before, and in general acts in coordinate $h \in H$ by $\phi^{-1}_h(k)$.

\section{Minimal $\Phi$-joinings and minimality of induced systems}

Let $G \curvearrowright X_i$ be Cantor dynamical systems. We say they are \emph{disjoint} if $\prod_i X_i$ (with the diagonal action) is minimal. Of course, a family of systems with two copies of the same system $G \curvearrowright X$ is never disjoint, since the diagonal is invariant. We say a system has \emph{minimal self-joinings} \cite{Ho08} (a.k.a.\ is \emph{doubly minimal} \cite{GlWe15}) if $X \times X$ has only the diagonal as a nontrivial proper subsystem. We now define a variant of this notion, where we see the other copies of $X$ through a group automorphism (in which case we need not allow the diagonal as an invariant system).

If $\phi : G \to G$ is an automorphism, we write $X^\phi$ for the $G$-system with action $g \cdot x = \phi(g) \cdot x$.

\begin{definition}
Let $\Phi$ be a set of automorphisms of $G$. We say the system $G \curvearrowright X$ has \emph{minimal $\Phi$-joinings} if the family $\{X^\phi \;:\; \phi \in \Phi\}$ is disjoint.
\end{definition}

\begin{lemma}
Let $K \curvearrowright X$ be a Cantor dynamical system. Let $\phi : H \to \Aut(K)$ be a group monomorphism and $G = K \rtimes_\phi H$. Let $G \curvearrowright X^H$ be the induction to $G$. Let $\Phi = \{ \phi(h) \;:\; h \in H \}$. If $X$ has minimal $\Phi$-joinings, then the $K$-subaction of $G \curvearrowright X^H$ is minimal.
\end{lemma}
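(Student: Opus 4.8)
By Lemma~\ref{lem:KAction}, the $K$-subaction on the induction is the system on $X^H$ where $k \in K$ acts by $(k*z)_h = \phi_{h^{-1}}(k)\cdot(z_h)$. Unraveling this, coordinate $h$ carries a copy of $X$, but the action of $k$ in that coordinate is precomposed with the automorphism $\phi_{h^{-1}}$. In the language of the $\phi$-twist notation $X^\psi$ (where $g \cdot x = \psi(g)\cdot x$), coordinate $h$ is precisely the system $X^{\phi_{h^{-1}}}$. So the $K$-subaction on $X^H$ is the product system $\prod_{h \in H} X^{\phi_{h^{-1}}}$ under the diagonal $K$-action. Since $H$ is infinite, this is an infinite product.

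**The plan.** The hypothesis is that $X$ has minimal $\Phi$-joinings, i.e.\ the family $\{X^{\psi} : \psi \in \Phi\}$ is disjoint, where $\Phi = \{\phi(h) : h \in H\}$. Disjointness (as defined in the excerpt) means $\prod_{\psi \in \Phi} X^{\psi}$ is minimal under the diagonal action. So the plan is essentially to match the product I computed, $\prod_{h \in H} X^{\phi_{h^{-1}}}$, against the product appearing in the definition of minimal $\Phi$-joinings. First I would verify that the index sets agree: as $h$ ranges over $H$, so does $h^{-1}$, and $\{\phi_{h^{-1}} : h \in H\} = \{\phi(h) : h \in H\} = \Phi$; thus the two products have the same factors. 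Then the minimal $\Phi$-joinings hypothesis says exactly that this diagonal product is minimal, which is the conclusion.

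**Main subtlety.** The definition of disjointness in the excerpt ranges over the set $\Phi$, treating it as a set of automorphisms, whereas the induction indexes by $H$, treating factors as a family. These can differ if $\phi$ is not injective (several $h$ giving the same automorphism, hence repeated factors), or if we read ``family'' versus ``set''. Here $\phi$ is assumed to be a monomorphism, so $h \mapsto \phi(h)$ is a bijection onto $\Phi$ and there are no coincidences: the product $\prod_{h\in H} X^{\phi_{h^{-1}}}$ is, up to reindexing by the bijection $h \mapsto \phi(h^{-1})$, literally $\prod_{\psi \in \Phi} X^{\psi}$. This is the step I expect to require the most care, and I would spell out that injectivity of $\phi$ is precisely what prevents repeated factors (which, like a diagonal, would obstruct minimality). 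The remaining point is that ``disjoint'' as defined covers arbitrary (here infinite) products; I would note that minimality of the full infinite diagonal product is exactly what disjointness of the family asserts, so no passage from finite to infinite products is needed beyond the definition itself.

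**Assembling.** I would therefore write: by Lemma~\ref{lem:KAction}, the $K$-subaction on $X^H$ is $\prod_{h\in H} X^{\phi_{h^{-1}}}$ with the diagonal $K$-action; since $\phi$ is a monomorphism, the map $h \mapsto \phi(h^{-1})$ is a bijection $H \to \Phi$, so this product is isomorphic (by permuting coordinates) to $\prod_{\psi \in \Phi} X^{\psi}$; and minimal $\Phi$-joinings says precisely that this product is minimal. Hence the $K$-subaction is minimal, completing the proof.
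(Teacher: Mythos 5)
Your proposal is correct and follows essentially the same route as the paper: both invoke the coordinatewise formula $(k*z)_h = \phi_{h^{-1}}(k)\cdot(z_h)$ to identify the $K$-subaction with the diagonal action on $\prod_{\psi\in\Phi} X^{\psi}$ (the paper phrases the reindexing as sending $z_{h^{-1}}$ to the factor $X^{\phi_h}$), and then conclude by the minimal $\Phi$-joinings hypothesis. Your explicit remark that injectivity of $\phi$ is what rules out repeated factors is a worthwhile clarification of a step the paper leaves implicit, but it is not a different argument.
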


\begin{proof}
The $K$-action on $X^H$ is
\[ (k * z)_h = \phi_{h^{-1}}(k) \cdot (z_h). \]
In other words, $K \curvearrowright X^H$ is isomorphic to the diagonal action $\prod_{\phi \in \Phi} X_\phi$, by mapping the point $z_{h^{-1}}$ to $X_{\phi_h}$. By the assumption that $X$ has minimal $\Phi$-joinings, $\prod_\phi X_\phi$ is minimal, therefore $K \curvearrowright X^H$ is minimal.
\end{proof}

\begin{lemma}
Suppose $A, B, D$ are groups. Suppose
\begin{enumerate}
\item $A$ acts on Cantor space $X$ antidiagonally minimally,
\item $B$ acts on Cantor space $X$ with faithful quotient $G$
\item $\psi : D \to \Aut(A*B)$ is defined as follows: $\psi_d(a) = a$ for $a \in A, d \in D$, and $\psi_d(b) = \phi_d b \phi_d^{-1}$ for $b \in b, d \in D$, where $\phi : D \to B$ is a homomorphism which is injective into $G/Z(G)$.
\end{enumerate}
Then the $A*B$-subaction of $X^D$ is minimal, where $X^D$ is the induction of $A*B \curvearrowright X$ to $(A*B) \rtimes_{\psi} D$.
\end{lemma}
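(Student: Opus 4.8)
The plan is to verify that $X$ has minimal $\Psi$-joinings for $\Psi = \{\psi_d : d \in D\}$ and then invoke the preceding lemma. First I would note that $\psi$ is injective (so that $\psi : D \to \Aut(A*B)$ is a monomorphism as that lemma requires): $\psi_d$ fixes $A$ pointwise and conjugates $B$ by $\phi_d$, so $\psi_d = \ID$ forces $\phi_d$ to centralize $A*B$, whence $\phi_d = 1$ since a free product of nontrivial groups is centerless, and then $d=1$ as $\phi$ is injective. By definition, minimal $\Psi$-joinings means the family $\{X^{\psi_d}\}_{d\in D}$ is disjoint, i.e.\ the diagonal action on $\prod_{d\in D} X^{\psi_d}$ is minimal; this is exactly the coordinatewise action of Lemma~\ref{lem:KAction}. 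So the preceding lemma reduces the whole statement to proving that this product is minimal, and since a basic open set constrains only finitely many coordinates and the action is coordinatewise, it suffices to show every finite subproduct $\prod_{d\in F} X^{\psi_d}$ ($F \subseteq D$ finite) is minimal.

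Next I would unwind the two actions on $\prod_{d\in F} X$. Writing $g_d \in G$ for the image of $\phi_d$ and $c \in G$ for the image of $b \in B$ in the faithful quotient, the generator $a \in A$ acts diagonally, by $x_d \mapsto a \cdot x_d$ (since $\psi_d(a)=a$), while $b \in B$ acts on coordinate $d$ by the homeomorphism $g_d c g_d^{-1}$. Now the diagonal $A$-action is antidiagonally minimal, hence minimal on the antidiagonal of $X^F$; as the all-distinct tuples are dense in $X^F$ (because $X$ is perfect), the $A$-orbit closure of any all-distinct tuple is all of $X^F$. Thus minimality of the finite subproduct reduces to the single claim that from any tuple $(x_d)_{d\in F}$ one can reach, by applying some element of $A*B$, a tuple whose coordinates are pairwise distinct. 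Because the $A$-action is diagonal it preserves every coincidence $x_d = x_{d'}$, so the separation of coordinates must come from $B$.

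The heart of the matter is this separation, which is where hypothesis (3) enters. Injectivity of $\phi$ into $G/Z(G)$ says precisely that the $g_d$ are pairwise distinct modulo $Z(G)$; this is necessary, since $g_d \equiv g_{d'} \pmod{Z(G)}$ would force $g_d c g_d^{-1} = g_{d'} c g_{d'}^{-1}$ for every $c$, making coordinates $d,d'$ inseparable. I would argue by reducing the number of coincident coordinates one pair at a time. Given a tuple that is not yet all-distinct, choose coincident coordinates $d,d'$; since $g_{d'}^{-1} g_d \notin Z(G)$, its centralizer $C_G(g_{d'}^{-1} g_d)$ is a proper subgroup, so there is $c \in G$ with $g_d c g_d^{-1} \neq g_{d'} c g_{d'}^{-1}$ as homeomorphisms (using faithfulness of the $G$-action). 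Fixing such a $c$ and a preimage $b$, I would first apply antidiagonal minimality of $A$ to move the tuple diagonally, hence preserving its coincidence pattern, into a generic position: the requirement that the common value of the class of $\{d,d'\}$ avoid $\mathrm{Fix}\big((g_{d'} c g_{d'}^{-1})^{-1}(g_d c g_d^{-1})\big)$, and that no two currently-distinct coordinates collide after applying $b$, are each the avoidance of a nowhere-dense set (again by faithfulness and perfectness of $X$), so finitely many of them can be met at once by a Baire-category argument, and the corresponding open target box is reachable by antidiagonal minimality. Applying $b$ then splits $d,d'$ without merging any distinct pair, strictly increasing the number of distinct coordinates; iterating finitely often yields an all-distinct tuple in the orbit, and a final use of antidiagonal minimality of $A$ sends it into any prescribed box, giving density of the orbit and hence minimality.

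I expect the main obstacle to be exactly this last step: separating with $B$ while controlling collisions. The subtlety is that separating all coincident pairs simultaneously with one $c$ runs into a ``union of centralizers may be all of $G$'' problem; handling a single pair at a time, and re-genericizing with the antidiagonally minimal $A$-action between steps, is what keeps each step to a single proper-subgroup condition on $c$ together with finitely many nowhere-dense positioning conditions, which is what makes the argument go through under the stated hypotheses.
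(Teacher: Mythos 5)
Your proposal is correct and follows essentially the same route as the paper: reduce to finite subproducts $X^F$, use the diagonal antidiagonally minimal $A$-action to handle all-distinct tuples, and separate coincident coordinates one pair at a time by first positioning with $A$ and then applying a $b\in B$ whose conjugates $\phi_{f^{-1}}b\phi_f$ and $\phi_{f'^{-1}}b\phi_{f'}$ differ, which is exactly where injectivity of $\phi$ into $G/Z(G)$ enters. (One minor slip: $\psi_d=\ID$ only forces $\phi_d$ to centralize $B$, not all of $A*B$; injectivity of $\psi$ then follows because $\phi_d\in Z(B)$ maps into $Z(G)$, hence $d=1$ by hypothesis (3).)
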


\begin{proof}
Take any $F \Subset D$, and consider the factor $X^F$ of $X^D$ under the action of $A*B$. The action of $A$ on $X^F$ is diagonal, while the action of $b \in B$ is given by 
\[ (b * x)_f = \psi_{f^{-1}}(b) \cdot x_f = \phi_{f^{-1}} b \phi_f \cdot x_f. \]
We need to show that any $x \in X^F$ can be moved into any open set by an $A*B$-translation. If $x_f \neq x_{f'}$ for all $f \neq f'$, this is clear, because the action of $A$ is antidiagonally minimal.

Suppose then that the tuple has an equal coordinate pair $x_f = x_{f'}$ for some $f \neq f'$. We show that by translating first by an element of $A$, and then by an element of $B$, we can separate this pair, while not introducing any new equal coordinate pairs. Let us first restrict our attention only to this pair.

We start by applying a translation by $a \in A$, using which we can take $y = ax_f = ax_{f'}$ to be in any open set in $X$. Consider then the application of $b \in B$. The pair $(y, y) = (ax_f, ax_{f'})$ is taken to the pair
\[ (\phi_{f^{-1}} b \phi_f \cdot y, \phi_{f'^{-1}} b \phi_{f'} \cdot y), \]
which we would like to be off-diagonal. Equivalently, we want $y$ to be in the support of
\[ g = \phi_{f'^{-1}} b^{-1} \phi_{f'} \phi_{f^{-1}} b \phi_f. \]
Since $y$ can be in any open set, this is possible if and only if $g$ has nonempty support. But $g$ is conjugate to 
\[ \phi_f \phi_{f'^{-1}} b^{-1} \phi_{f'} \phi_{f^{-1}} b = [\phi_{f' f^{-1}}, b], \]
which for some choice of $b$ indeed has nontrivial support, since $f \neq f'$ and thus by the assumption $\phi$ maps $f'f^{-1}$ outside the center of the computable quotient of the $B$-action.

We conclude that as long as $b$ was chosen suitably, and $y = ax_f = ax_{f'}$ was taken in a certain open set, the action of $b$ separates this pair. It is clear that by choosing $a$ suitably, we can ensure that no new coincidences are introduced by applying $b$. Namely, we have a free choice of $a$ apart from the restriction that $y$ is taken into a certain open set. Since $A$ acts antidiagonally minimally, we can freely choose small open sets where $a$ moves each of the points $x_{f''}$, in such a way that the $b$-action does not introduce any new coincidences.
\end{proof}

\begin{lemma}
\label{lem:ABCD}
Suppose $A, B, C, D$ are groups and $X$ is Cantor space. Suppose
\begin{enumerate}
\item $A, B, C$ are finitely-generated;
\item  $A$ acts on $X$ antidiagonally minimally with a computable action;
\item $B$ acts expansively with computable action on $X$ with faithful quotient $G$;
\item $C$ is either a direct product of two infinite groups, or is non-amenable;
\item the map $\psi' : D \to \Aut((A*B) \times C)$ is obtained from $\psi : D \to \Aut(A*B)$ by acting trivially on $C$, and $\psi$ is as in the previous lemma (conjugation action through $\phi : D \to B$ which is injective into $G/Z(G)$).
\end{enumerate}
Then there is a sofic shift on $(A*B) \times C$ where $C$ acts trivially, and the $A*B$-subaction on $X^D$ is minimal, where $X^D$ is the induction of $(A*B) \times C \curvearrowright X$ to $((A*B) \times C) \rtimes_{\psi'} D$.
\end{lemma}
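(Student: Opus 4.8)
The statement bundles together two essentially independent assertions, and the hypotheses on $C$ serve only the first of them. The plan is to prove (i) that the base system $X$, regarded as a $((A*B)\times C)$-system with $C$ acting trivially, is a sofic subshift, and (ii) that the $A*B$-subaction of the induction $X^D$ is minimal. For (ii) the key observation is that the factor $C$ is completely invisible: since $C$ acts trivially on $X$ and $\psi'$ restricts to the trivial action on $C$, appending $C$ alters neither the space $X^D$ nor the formula by which $A*B$ acts on it.

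For the minimality claim (ii), I would apply Lemma~\ref{lem:KAction} with $K = (A*B)\times C$ and $H = D$ to write the $K$-subaction on the induction as $(k * z)_d = \psi'_{d^{-1}}(k)\cdot z_d$. Restricting to $k = (w, 1_C)$ with $w \in A*B$, and using $\psi'_{d^{-1}}(w,1_C) = (\psi_{d^{-1}}(w),1_C)$ together with triviality of the $C$-action on $X$, this reduces to $(w * z)_d = \psi_{d^{-1}}(w)\cdot z_d$. This is verbatim the formula describing the $A*B$-subaction of the induction of $A*B \curvearrowright X$ to $(A*B)\rtimes_\psi D$ in the previous lemma. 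Thus the two $A*B$-subactions are literally the same dynamical system, and the hypotheses of the previous lemma hold here ($A$ antidiagonally minimal from assumption~(2), $B$ with faithful quotient $G$ from~(3), and $\psi$ the conjugation action through $\phi$ injective into $G/Z(G)$ from~(5)), so minimality follows immediately.

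For the soficity claim (i), I would first verify that $X$ is a computable subshift over $(A*B)\times C$. It is expansive because its $A$-subaction already is (Lemma~\ref{lem:ADExpansive}) and expansiveness of a subaction trivially passes to the full action; hence $X$ is (isomorphic to) a subshift. It is computable because the group is finitely generated (assumption~(1)) and each generator acts computably: the generators of $A$ and of $B$ act computably by~(2) and~(3), these combine to a computable action of $A*B$, and $C$ acts trivially. Then I would split on assumption~(4). If $C = C_1 \times C_2$ with $C_1,C_2$ infinite, then both $C_i$ are infinite and finitely generated (being quotients of the finitely generated $C$), and I would invoke Lemma~\ref{lem:Simulable2} with the triple $(A*B,\,C_1,\,C_2)$: our system is precisely the pullback of the computable $(A*B)$-subshift $X$ to $(A*B)\times C_1 \times C_2$, hence sofic. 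If instead $C$ is non-amenable, then $A*B$ is also infinite, finitely generated and non-amenable: both $A$ and $B$ are infinite (a finite group can neither act antidiagonally minimally on infinite Cantor space, being minimal with finite orbits, nor act expansively on it, since an expansive action of a finite group exhibits the space as a finite subshift), so $A*B$ contains a free group on two generators. Consequently $(A*B)\times C$ is self-simulable by Lemma~\ref{lem:Simulable}, and the computable expansive action on $X$ has an SFT cover, i.e.\ is sofic.

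The main obstacle is conceptual rather than computational: it is recognizing that the two halves of the conclusion have entirely different sources, with $C$ and hypothesis~(4) feeding only into soficity and being irrelevant to minimality. Once this separation is made, minimality is an immediate reduction to the previous lemma, and soficity is a routine check of computability and expansiveness of the base system followed by the appropriate black-box simulation lemma. The only points demanding a little care are confirming that $A*B$ inherits non-amenability (which rests on both $A$ and $B$ being infinite) and that the direct factors $C_1,C_2$ stay finitely generated.
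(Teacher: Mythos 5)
Your proposal is correct and follows essentially the same route as the paper: reduce minimality to the previous lemma by observing that $C$ is invisible in the $A*B$-subaction of the induction, and obtain soficity from Lemma~\ref{lem:Simulable} or Lemma~\ref{lem:Simulable2} according to the two cases of hypothesis~(4). The only (harmless) divergence is in the non-amenable case, where you apply self-simulability of $(A*B)\times C$ directly to the trivial extension rather than passing through the free extension as the paper does, and you are more explicit than the paper about why $A*B$ is infinite and non-amenable.
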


\begin{proof}
Extend the computable action of $A*B$ on $X$ trivially to $(A*B) \times C$. This system is sofic: If $C$ is nonamenable, then by Lemma~\ref{lem:Simulable} the free extension of the $A*B$-system to $(A*B) \times C$ is sofic, and the trivial extension is clearly a subSFT of this system. If in turn $C$ is a direct product of two infinite groups, then it is sofic by Lemma~\ref{lem:Simulable2}.

Since $D$ acts trivially on $C$, we have $((A*B) \times C) \rtimes_{\psi'} D \cong ((A*B) \rtimes_{\psi} D) \times C =: H$, where $C$ acts trivially on $X$. By the previous lemma, the $A*B$-subaction is minimal on the induction to $(A*B) \rtimes_\psi D$. Therefore it also acts trivially in the trivial extension to $H$.
\end{proof}

\section{A sofic minimal subshift on a group that is not finitely-generated}

\begin{theorem}
\label{thm:MainTheorem}
There is a sofic subshift $X$ on $F_4 \times F_2$, such that the action of $F_2$ is trivial, and in its free extension in some group extension $(F_4 \times F_2) \rtimes F_2$, the subaction of $F_4$ is minimal.
\end{theorem}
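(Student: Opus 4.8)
The plan is to instantiate Lemma~\ref{lem:ABCD} with $A = B = C = D = F_2$. Under this choice $A * B = F_2 * F_2 = F_4$, so the sofic shift will live on $(A*B) \times C = F_4 \times F_2$, the trivially-acting factor is $C = F_2$, and the semidirect factor is $D = F_2$, yielding the group $((A*B) \times C) \rtimes_{\psi'} D = (F_4 \times F_2) \rtimes_{\psi'} F_2$ exactly as required. Thus essentially all the dynamical work has already been done in Lemma~\ref{lem:ABCD}; the task reduces to realizing the dynamical hypotheses (2) and (3) on two abstract copies of $F_2$ and choosing the homomorphism $\phi : D \to B$ correctly.

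The key idea for hypotheses (2) and (3) is to let $A$ and $B$ act \emph{not} as free subgroups of Thompson's $V$, but through surjections onto $V$. Since $V$ is $2$-generated (Lemma~\ref{lem:32}), I would fix surjections $q_A : A \twoheadrightarrow V$ and $q_B : B \twoheadrightarrow V$, let $A * B = F_4$ act on Cantor space $X$ via the combined homomorphism $A*B \to V$ (and its natural action), and note that the $A$- and $B$-subactions are then $q_A$ and $q_B$. Because $q_A, q_B$ are surjective, the image of each of $A, B$ in the homeomorphism group of $X$ is all of $V$, so the $A$-action and the $B$-action coincide, as sets of homeomorphisms, with the natural action of $V$. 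Consequently the $A$-action is antidiagonally minimal and the $B$-action is expansive (the natural action of $V$ has both properties, as established above), and both are computable, since the natural action of $V$ is computable (Lemma~\ref{lem:VEffective}) and these actions factor through it. This settles hypotheses (2) and (3); hypothesis (1) is immediate and hypothesis (4) holds because $C = F_2$ is non-amenable.

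It then remains to produce $\phi : D \to B$ that is injective into $G/Z(G)$, where $G$ is the faithful quotient of the $B$-action. Since $V$ acts faithfully and $q_B$ is surjective, this faithful quotient is $G = V$; and as $V$ is centerless (Lemma~\ref{lem:VCenterless}) we have $G/Z(G) = V$, so the requirement is precisely that the composite $D \xrightarrow{\phi} B \xrightarrow{q_B} V$ be injective. To arrange this I would invoke Lemma~\ref{lem:FreeInV} to fix a free subgroup $\langle h_1, h_2 \rangle \cong F_2 \le V$, choose $q_B$-preimages $b_1, b_2 \in B$ of $h_1, h_2$, and define $\phi$ by sending the two free generators of $D = F_2$ to $b_1, b_2$. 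Then $q_B \circ \phi$ maps the free generators of $D$ to the free generators $h_1, h_2$, hence is an isomorphism onto $\langle h_1, h_2 \rangle$ and in particular injective. The conjugation action $\psi$ of $D$ on $A*B$ is then through $\phi_d \in B \le A*B$, i.e.\ through elements of $F_4$, matching the description in the statement.

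With all five hypotheses verified, Lemma~\ref{lem:ABCD} directly produces a sofic shift on $(A*B) \times C = F_4 \times F_2$ on which $C = F_2$ acts trivially, together with minimality of the $A*B = F_4$-subaction on the induction $X^D$ to $(F_4 \times F_2) \rtimes_{\psi'} F_2$. I expect the main obstacle to be hypothesis (2): a finitely-generated (here free) group is required to act antidiagonally minimally, which is a strong transitivity demand and would fail for a generic free \emph{subgroup} of $V$. Routing the action through a surjection onto $V$, rather than embedding a free group, is exactly what resolves this, as it lets the abstract group $F_4$ inherit the antidiagonal minimality of $V$'s natural action while the ambient group stays free. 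The remaining verifications, namely soficity of the trivial extension via Lemma~\ref{lem:Simulable} (using that $F_4$ and $F_2$ are infinite, finitely-generated and non-amenable) and the semidirect-product rearrangement $((A*B)\times C)\rtimes_{\psi'}D \cong ((A*B)\rtimes_\psi D)\times C$, are carried out verbatim as in the proof of Lemma~\ref{lem:ABCD}.
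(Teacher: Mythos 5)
Your proposal is correct and follows essentially the same route as the paper: instantiating Lemma~\ref{lem:ABCD} with four copies of $F_2$, letting $A$ and $B$ act through surjections onto Thompson's $V$ (via $2$-generation, Lemma~\ref{lem:32}) so that antidiagonal minimality, expansivity and computability are inherited from the natural $V$-action, and defining $\phi : D \to B$ by lifting the free generators of a free subgroup $F_2 \leq V$ (Lemma~\ref{lem:FreeInV}), with centerlessness of $V$ giving injectivity into $G/Z(G)$. The only difference is cosmetic: the paper takes $\phi$ to be the identity on a larger copy $E = F_2$ and then passes to a section $D \leq E$ of the free subgroup, whereas you define $\phi$ on an abstract $D = F_2$ directly by sending generators to preimages; these are the same construction.
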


\begin{proof}
Set $A = F_2, B = F_2, C = F_2, E = F_2$, and define $\psi : E \to \Aut(A*B)$ by fixing $A$ pointwise and letting $\psi_e(b) = ebe^{-1}$ for $e \in E, b \in B$. Then extend $\psi$ to $\psi' : E \to \Aut((A*B) \times C)$ by acting trivially on $C$. This defines a group $((A*B) \times C) \rtimes_\psi E$. Our final group will be the group $((A*B) \times C) \rtimes_\psi D$, for a suitable subgroup $D \leq E$.

We check the assumptions of Lemma~\ref{lem:ABCD}, and in the process make the choice of $D$ and choose the actions of $A, B$ on Cantor space $Y$. Thus we need to show that
\begin{enumerate}
\item $A, B, C$ are finitely-generated,
\item $A$ acts on $Y$ antidiagonally minimally with a computable action,
\item $B$ admits an expansive computable action on $Y$ with faithful quotient $G$, 
\item $C$ is either a direct product of two infinite groups, or is non-amenable, 
\item $\psi' : D \to A*B$ comes from a conjugation action through $\phi : D \to B$, which is injective into $G/Z(G)$.
\end{enumerate}

For the first item, obviously free groups are finitely-generated. For the second, recall that Thompson's $V$ is $2$-generated (Lemma~\ref{lem:32}), and have $A$ act by these generators on Cantor space, with the natural action of $V$. Then the action of $A$ is antidiagonally minimal by Lemma~\ref{lem:VMinimal} and is computable by Lemma~\ref{lem:VEffective}. For the third, take also $B$ to act by the natural action of $V$. This is expansive by~\ref{lem:VExpansive}, and again computable by Lemma~\ref{lem:VEffective}. The faithful quotient is then $G = V$. For the fourth, we observe that $F_2$ is non-amenable.

for the fifth, define $\phi : E \to B$ by $\phi_e = e$, the natural identification of the two copies $E$ and $B$ of $F_2$. Since $V$ is not free on the two generators (indeed, it is not a free group), of course $\phi$ collapses many elements. Now we observe that $V$ contains a copy of $F_2$ by Lemma~\ref{lem:FreeInV}, say $F \leq V$. Since $F$ is free, we can pick a section $D$ for it, so that $\phi : D \to F$ is a monomorphism. Since $V$ is centerless by Lemma~\ref{lem:VCenterless}, $\phi : D \to F \leq B$ is injective into $G/Z(G)$.

Thus we have satisfied the assumptions of the lemma. We conclude from the lemma that there is a sofic shift on $F_4 \times F_2 = A*B \times C$ where $C$ acts trivially, such that the $A*B = F_4$-subaction of $X^D = X^{F_2}$ is minimal, where $X^{F_2}$ is the induction of the system $A*B \times C = F_4 \times F_2 \curvearrowright X$ to $(A*B) \times C) \rtimes_{\psi'} D = (F_4 \times F_2) \rtimes_{\psi'} F_2$.
\end{proof}

\begin{corollary}
There exists a group that is not finitely-generated, and which admits a minimal sofic shift.
\end{corollary}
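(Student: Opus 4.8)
The plan is to deduce the corollary from Theorem~\ref{thm:MainTheorem} by re-running its proof with one change: take the acting group $D$ to be a copy of $F_\infty$ rather than $F_2$. The crucial observation is that Lemma~\ref{lem:ABCD} imposes finite generation only on $A, B, C$, and never on $D$; the entire minimality argument of the preceding lemma works for any group $D$ equipped with a homomorphism $\phi : D \to B$ injective into $G/Z(G)$. Thus the construction is insensitive to the rank of $D$, and all we must do is produce a suitable $D$ of infinite rank and then verify that the resulting semidirect product fails to be finitely generated.

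To locate such a $D$, I would first use Lemma~\ref{lem:FreeInV} to get $F_2 \leq V$; since a free group of rank $2$ contains free subgroups of every countable rank, this yields an embedded copy $F_\infty \leq V$. As in the proof of Theorem~\ref{thm:MainTheorem}, let $A = B = C = E = F_2$, let $B$ act through the natural action of $V$ (so the faithful quotient is $G = V$), and let $\phi : E \to B \to V$ be the composite with the quotient map; by Lemma~\ref{lem:32} the generators may be chosen so that $E \to V$ is surjective. Because $F_\infty$ is free, I can lift it through this surjection to a section $D \leq E$ with $\phi|_D : D \to F_\infty \leq V$ an isomorphism, and since $V$ is centerless (Lemma~\ref{lem:VCenterless}), $\phi|_D$ is injective into $G/Z(G) = V$. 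With this $D$, all five hypotheses of Lemma~\ref{lem:ABCD} hold verbatim as checked in Theorem~\ref{thm:MainTheorem}, so the $A*B = F_4$-subaction on the induction $X^D$ to the group $((A*B) \times C) \rtimes_{\psi'} D = (F_4 \times F_2) \rtimes F_\infty$ is minimal, and the underlying $(A*B) \times C$-shift $X$ is sofic.

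It then remains to promote these conclusions to the full group and to establish non-finite generation. Minimality transfers automatically: since $F_4 \leq (F_4 \times F_2) \rtimes F_\infty$ already acts minimally on $X^D$, any closed invariant set for the whole group is in particular $F_4$-invariant, hence empty or everything, so the full action is minimal; and $X^D$ is infinite because $X$ is Cantor space. Soficity transfers because $X^D$ is the free extension (induction) of a sofic shift from the \emph{finitely generated} subgroup $F_4 \times F_2$, and such inductions remain sofic (an SFT cover induces to an SFT cover and the factor map induces to a factor map), as used throughout the paper. Finally, the group $(F_4 \times F_2) \rtimes F_\infty$ surjects onto its acting quotient $F_\infty$ (the quotient by the normal subgroup $F_4 \times F_2$); as $F_\infty$ is not finitely generated and quotients of finitely generated groups are finitely generated, the full group is not finitely generated. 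The only points beyond Theorem~\ref{thm:MainTheorem} are the transfer of soficity across the non-finitely-generated extension and the non-finite-generation of the semidirect product, and I expect the former — confirming that the induced SFT cover and factor map genuinely witness soficity over the infinitely-generated group — to be the step requiring the most care, though it is routine given the coset-wise nature of induction.
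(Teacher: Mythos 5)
Your argument is correct, but it reaches the corollary by a different route than the paper. The paper does \emph{not} reopen the construction: it takes the conclusion of Theorem~\ref{thm:MainTheorem} as is, chooses a subgroup of $(F_4\times F_2)\rtimes F_2$ of the form $(F_4\times F_2)\rtimes F_\infty$ (using $F_\infty\leq F_2$ inside the acting copy of $F_2$), and observes that the $F_4$-subaction on the induction to this subgroup is a \emph{factor} of the $F_4$-subaction on the induction to the whole group (restriction of coordinates to the smaller coset space), so minimality passes down; soficity of the induction from the finitely generated $F_4\times F_2$ is then implicit, exactly as in your last paragraph. You instead go back into the proof of Theorem~\ref{thm:MainTheorem} and re-instantiate Lemma~\ref{lem:ABCD} with $D\cong F_\infty$, correctly noting that neither that lemma nor the preceding minimality lemma ever uses finite generation of $D$ (the minimality proof only ever looks at finite subsets $F\Subset D$), and your lifting of $F_\infty\leq F_2\leq V$ through the surjection $E\to V$ to a free section $D\leq E$ is sound. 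The two approaches buy slightly different things: the paper's is shorter and treats the theorem as a black box, at the cost of the (easy but unstated) factor observation; yours makes explicit that the construction is uniform in $D$ and yields the target group directly, and you are also more careful than the paper in spelling out the transfer of soficity across the non-finitely-generated extension, the infiniteness of the resulting shift, and why the semidirect product fails to be finitely generated. No gaps.
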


\begin{proof}
Take a subgroup of $(F_4 \times F_2) \rtimes F_2$ of the form $(F_4 \times F_2) \rtimes F_{\infty}$. The sofic shift defined in the previous theorem is defined on $(F_4 \times F_2)$, and the $F_4$-subaction is minimal on the induction to $(F_4 \times F_2) \rtimes F_2$. Since $F_4 \curvearrowright (F_4 \times F_2) \rtimes F_\infty$ is a factor of $F_4 \curvearrowright (F_4 \times F_2) \rtimes F_2$, also the former is minimal. In particular, we have found a sofic shift on $(F_4 \times F_2) \rtimes F_{\infty}$ which is minimal.
\end{proof}

\bibliographystyle{plain}
\bibliography{../../../bib/bib}{}

\def\ocirc#1{\ifmmode\setbox0=\hbox{$#1$}\dimen0=\ht0 \advance\dimen0
  by1pt\rlap{\hbox to\wd0{\hss\raise\dimen0
  \hbox{\hskip.2em$\br\circ$}\hss}}#1\else {\accent"17 #1}\fi}\def\cprime{$'$}
\begin{thebibliography}{10}

\bibitem{AkGlWe08}
Ethan Akin, Eli Glasner, and Benjamin Weiss.
\newblock Generically there is but one self homeomorphism of the cantor set.
\newblock {\em Transactions of the American Mathematical Society},
  360(7):3613--3630, 2008.

\bibitem{AkHuKe03}
Ethan Akin, Mike Hurley, and Judy~Anita Kennedy.
\newblock {\em Dynamics of topologically generic homeomorphisms}, volume 783.
\newblock American Mathematical Soc., 2003.

\bibitem{AlPr01}
Steve Alpern and Vidhu~S Prasad.
\newblock {\em Typical dynamics of volume preserving homeomorphisms}, volume
  139.
\newblock Cambridge University Press, 2001.

\bibitem{AlPr02}
Steve Alpern and VS~Prasad.
\newblock Properties generic for {L}ebesgue space automorphisms are generic for
  measure-preserving manifold homeomorphisms.
\newblock {\em Ergodic Theory and Dynamical Systems}, 22(6):1587--1620, 2002.

\bibitem{Ba19}
Sebasti{\'a}n Barbieri.
\newblock A geometric simulation theorem on direct products of finitely
  generated groups.
\newblock {\em Discrete Analysis}, page 8820, 2019.

\bibitem{Ba23}
Sebasti{\'a}n Barbieri.
\newblock Aperiodic subshifts of finite type on groups which are not finitely
  generated.
\newblock {\em Proceedings of the American Mathematical Society},
  151(09):3839--3843, 2023.

\bibitem{BaSa19}
Sebasti\'an Barbieri and Mathieu Sablik.
\newblock A generalization of the simulation theorem for semidirect products.
\newblock {\em Ergodic Theory and Dynamical Systems}, 39(12):3185--3206, 2019.

\bibitem{BaSaSa21}
Sebasti{\'a}n {Barbieri}, Mathieu {Sablik}, and Ville {Salo}.
\newblock {Groups with self-simulable zero-dimensional dynamics}.
\newblock {\em arXiv e-prints}, page arXiv:2104.05141, April 2021.

\bibitem{BaSa24}
Laurent Bartholdi and Ville Salo.
\newblock Shifts on the lamplighter group, 2024.

\bibitem{BeFoHyZa25}
James Belk, Francesco Fournier-Facio, James Hyde, and Matthew~CB Zaremsky.
\newblock Boone--higman embeddings of aut. fn/and mapping class groups of
  punctured surfaces.
\newblock {\em arXiv preprint arXiv:2503.21882}, 2025.

\bibitem{Be66}
Robert Berger.
\newblock The undecidability of the domino problem.
\newblock {\em Mem. Amer. Math. Soc. No.}, 66, 1966.
\newblock 72 pages.

\bibitem{BeNiDa12}
Nilson~C Bernardes~Jr and Udayan~B Darji.
\newblock Graph theoretic structure of maps of the {C}antor space.
\newblock {\em Advances in Mathematics}, 231(3-4):1655--1680, 2012.

\bibitem{BuClRi17}
José Burillo, Sean Cleary, and Claas~E. Röver.
\newblock {\em Obstructions for subgroups of Thompson’s group V}, page 1–4.
\newblock London Mathematical Society Lecture Note Series. Cambridge University
  Press, 2017.

\bibitem{CoGoRi22}
David~B. Cohen, Chaim Goodman-Strauss, and Yo'av Rieck.
\newblock Strongly aperiodic subshifts of finite type on hyperbolic groups.
\newblock {\em Ergodic Theory and Dynamical Systems}, 42(9):2740–2783, 2022.

\bibitem{Ju81}
Andr{\'e}s del Junco.
\newblock Disjointness of measure-preserving transformations, minimal
  self-joinings and category.
\newblock {\em Ergodic Theory and Dynamical Systems I: Proceedings Special
  Year, Maryland 1979--80}, pages 81--89, 1981.

\bibitem{DoHa20}
Casey Donoven and Scott Harper.
\newblock Infinite $\frac32$-generated groups.
\newblock {\em Bulletin of the London Mathematical Society}, 52(4):657--673,
  2020.

\bibitem{Do24}
Michal Doucha.
\newblock Strong topological {R}okhlin property, shadowing, and symbolic
  dynamics of countable groups.
\newblock {\em Journal of the European Mathematical Society}, 2024.

\bibitem{DoMeTs25}
Michal Doucha, Julien Melleray, and Todor Tsankov.
\newblock Dense and comeager conjugacy classes in zero-dimensional dynamics,
  2025.

\bibitem{EsMo20}
Julien {Esnay} and Etienne {Moutot}.
\newblock {Weakly and Strongly Aperiodic Subshifts of Finite Type on
  Baumslag-Solitar Groups}.
\newblock {\em arXiv e-prints}, page arXiv:2004.02534, April 2020.

\bibitem{FoWe04}
Matthew Foreman and Benjamin Weiss.
\newblock An anti-classification theorem for ergodic measure preserving
  transformations.
\newblock {\em Journal of the European Mathematical Society}, 6(3):277--292,
  2004.

\bibitem{Fu67}
Harry Furstenberg.
\newblock Disjointness in ergodic theory, minimal sets, and a problem in
  diophantine approximation.
\newblock {\em Mathematical systems theory}, 1(1):1--49, 1967.

\bibitem{GlThWe06}
Eli Glasner, J-P Thouvenot, and Benjamin Weiss.
\newblock Every countable group has the weak {R}ohlin property.
\newblock {\em Bulletin of the London Mathematical Society}, 38(6):932--936,
  2006.

\bibitem{GlWe01}
Eli Glasner and Benjamin Weiss.
\newblock The topological {R}ohlin property and topological entropy.
\newblock {\em American Journal of Mathematics}, 123(6):1055--1070, 2001.

\bibitem{GlWe15}
Eli Glasner and Benjamin Weiss.
\newblock On doubly minimal systems and a question regarding product
  recurrence, 2015.

\bibitem{Ha44}
Paul~R Halmos.
\newblock In general a measure preserving transformation is mixing.
\newblock {\em Annals of Mathematics}, 45(4):786--792, 1944.

\bibitem{Ho08}
Michael Hochman.
\newblock Genericity in topological dynamics.
\newblock {\em Ergodic Theory and Dynamical Systems}, 28(1):125--165, 2008.

\bibitem{Ho12a}
Michael Hochman.
\newblock Rohlin properties for $\mathbb{Z}^d$ actions on the {C}antor set.
\newblock {\em Transactions of the American Mathematical Society},
  364(3):1127--1143, 2012.

\bibitem{Ho12}
Derek~Holt (https://math.stackexchange.com/users/2820/derek holt).
\newblock Word length in the discrete heisenberg group.
\newblock Math.StackExchange.
\newblock URL:https://math.stackexchange.com/questions/115817/ (version:
  2020-10-15).

\bibitem{KeRo07}
Alexander~S Kechris and Christian Rosendal.
\newblock Turbulence, amalgamation, and generic automorphisms of homogeneous
  structures.
\newblock {\em Proceedings of the London Mathematical Society}, 94(2):302--350,
  2007.

\bibitem{Kw12}
Aleksandra Kwiatkowska.
\newblock The group of homeomorphisms of the {C}antor set has ample generics.
\newblock {\em Bulletin of the London Mathematical Society}, 44(6):1132--1146,
  2012.

\bibitem{Mo89}
Shahar Mozes.
\newblock Tilings, substitution systems and dynamical systems generated by
  them.
\newblock {\em J. Analyse Math.}, 53:139--186, 1989.

\bibitem{OxUl41}
John~C Oxtoby and Stanislas~M Ulam.
\newblock Measure-preserving homeomorphisms and metrical transitivity.
\newblock {\em Annals of Mathematics}, 42(4):874--920, 1941.

\bibitem{PaSc23}
Ronnie Pavlov and Scott Schmieding.
\newblock On the structure of generic subshifts.
\newblock {\em Nonlinearity}, 36(9):4904, 2023.

\bibitem{SaScUg20}
Ayse~A. Sahin, Michael Schraudner, and Ilie Ugarcovici.
\newblock A strongly aperiodic shift of finite type for the discrete heisenberg
  group, 2020.

\bibitem{ScSkWu24}
Eduard Schesler, Rachel Skipper, and Xiaolei Wu.
\newblock The {H}igman--{T}hompson groups {$V_n$} are $(2, 2, 2)$-generated.
\newblock {\em arXiv preprint arXiv:2411.09069}, 2024.

\end{thebibliography}

\end{document}